\definecolor{org}{rgb}{1,0.53,0.0}
\newcommand{\ymm}[2][]{\todo[color=org!40,#1]{#2}}
\definecolor{ssw}{rgb}{0.1,0.45,0.1}
\newcommand{\ssw}[2][]{\todo[color=ssw!40,#1]{#2}}
\newtheorem{thm}{Theorem}
\newtheorem{defin}[thm]{Definition}     
\newtheorem{lem}[thm]{Lemma}
\newtheorem{cor}[thm]{Corollary}
\newtheorem{rem}[thm]{Remark}
\newtheorem{prop}[thm]{Proposition}
\newtheorem{ass}[thm]{Assumption}
\numberwithin{thm}{section}
\numberwithin{equation}{section}
\newcommand{\R}{\mathbb{R}}
\newcommand{\Z}{\mathbb{Z}}
\newcommand{\N}{\mathbb{N}}
\newcommand{\KR}{\text{KR}}
\newcommand{\pKR}{\hat{p}_{\text{KR}}}
\newcommand{\eps}{\varepsilon}
\newcommand{\sven}[1]{{\textcolor{red}{#1}\index{#1}}}
\begin{document}
\begin{frontmatter}
	\title{On minimax density estimation via measure transport}
\begin{aug}

\author[]{\fnms{Sven}~\snm{Wang}\ead[label=e1]{svenwang@mit.edu}}
\and
\author[]{\fnms{Youssef}~\snm{Marzouk}\ead[label=e2]{ymarz@mit.edu}}

\address[]{Massachusetts Institute of Technology\\ Cambridge, MA 02139 USA\\ \printead[]{e1,e2}}
\end{aug}

\end{frontmatter}

\begin{abstract}
We study the convergence properties, in Hellinger and related distances, of nonparametric density estimators based on measure transport. These estimators represent the measure of interest as the pushforward of a chosen reference distribution under a
transport map, where
%
%
the map is chosen via a maximum likelihood objective (equivalently, an empirical Kullback--Leibler loss) or a penalized version thereof.
We establish concentration inequalities for a general class of penalized measure transport estimators, 
by combining techniques from M-estimation with analytical properties of the transport-based density representation.
We then demonstrate the implications of our theory for the case of triangular Knothe--Rosenblatt (KR) transports on the $d$-dimensional unit cube, and show that both penalized and unpenalized versions of such estimators achieve minimax optimal convergence rates over H\"older classes of densities.
Specifically, we establish optimal rates for unpenalized nonparametric maximum likelihood estimation over bounded H\"older-type balls, and then for certain Sobolev-penalized estimators and sieved wavelet estimators.

\end{abstract}

\tableofcontents
\section{Introduction}
\label{sec:intro}
A natural way to represent complicated probability measures is through transportation. The core idea is to express the measure of interest $P_0$ as the pushforward of a fixed reference measure $\eta$ through some map $T$, i.e., $P_0 = T_\# \eta$. If the goal is to learn $P_0$ through data, the problem thus becomes one of learning $T$. With an estimate $\hat T$ of $T$ in hand, it is easy to generate approximate samples from $P_0$, so long as one can sample from $\eta$ and evaluate $\hat T$ at a low cost. Similarly, assuming that $P_0$ and $\eta$ have densities with respect to a common dominating measure and that $\hat T$ is invertible (with a continuously differentiable inverse), then the change-of-variables formula immediately yields an estimate ${\hat T}_\# \eta (x)= \eta(\hat T^{-1}(x))\det \nabla \hat T^{-1} (x)$ for the density of $P_0$. 

A notable feature of transport-based representations is that they naturally incorporate the constraints imposed by the simplex of probability measures (i.e., non-negativity and integration to one). This feature is in contrast with common (both linear and nonlinear) approximation schemes employing, e.g., kernel projections \cite{GL14, GN16} or wavelets \cite{DJKP96}, where such constraints are at most approximately satisfied or require artificial enforcement. 
%
%

The transport construction has been used successfully in (i) variational inference \cite{el2012bayesian,RM15,durkan2019neural}, where the density of $P_0$ is accessible up to a normalizing constant; (ii) density estimation \cite{tabak2010density,tabak2013family,anderes2012general,dinh2017,PPM17,jaini2019}, where one is given a sample from $P_0$ and wishes to characterize its density; and (iii) generative modeling \cite{kingma2018}, where one aims to simulate additional samples from $P_0$ given an initial training set. Empirically, transport-based methodologies achieve state-of-the-art performance in many of these tasks \cite{de2020block,grathwohl2018ffjord,durkan2019neural,finlay2020train}, with additional uses ranging from likelihood-free inference \cite{papamakarios2019sequential,lueckmann2019likelihood,greenberg2019automatic} to anomaly detection \cite{gudovskiy2022cflow}.
%
%
Throughout these applications, a variety of parameterizations have been proposed for $T$, using both classical function approximation schemes (such as polynomials or wavelets) and neural networks. It is useful to recognize that there are, in general, infinitely many transport maps $T$ that can deterministically couple two given (absolutely continuous) probability measures. Normalizing flows, popular in the machine learning community \cite{papamakarios2021normalizing,kobyzev2020normalizing}, employ specific parameterizations of transport maps that ensure invertibility and easy evaluation of the Jacobian determinant $\det \nabla T^{-1}$ by construction. Neural ordinary differential equations (ODEs) \cite{haber2017stable,chen2018neural,finlay2020train} provide an alternative way of guaranteeing invertibility, by representing $T$ as the finite-time flow map of a deterministic ODE system. Optimal transport maps provide a canonical choice in certain applications, e.g. \cite{burger2012regularized}, especially when the geometric structure induced by the Wasserstein distance plays a role. 

%

Another natural class of transport schemes are triangular maps \cite{bogachev2005triangular}. They play a central role in many of the aforementioned constructions, e.g., as the building block of ``autoregressive'' normalizing flows, a format that includes many of the most prevalent current approaches. Such normalizing flows represent maps $T$ 
as a composition of several triangular maps, interleaved with coordinate permutations. 
Since triangular maps admit an explicit construction in terms of ordered marginal conditional distribution functions (see Section \ref{MLE-res} for details), they are also especially suited for conditional density estimation \cite{BMZ20} and conditional simulation \cite{MMPS16}. Moreover, they inherit sparsity from the conditional independence structure of $P_0$ and $\eta$, and thus have important connections to probabilistic graphical models; see \cite{spantini2018inference} for a full discussion. Other applications of triangular maps include simulation-based inference \cite{baptista2022bayesian} and nonlinear filtering \cite{spantini2019coupling}. Both the evaluation of Jacobian determinants and numerical inversion are convenient for triangular maps, making them an appealing choice from a computational perspective. The concrete examples studied below---i.e., to which the general theory developed in Section \ref{sec-penalty} is applied---will arise from triangular maps.

\smallskip 

Despite their empirical success, we still have little understanding about the theoretical properties of transport-based methods. (The few exceptions, notably the paper \cite{ISPH21} which is related to ours, will be discussed further below.) Key questions include:
Under what conditions are transport-based density estimators consistent, and what are statistical large-sample convergence rates for transport-based methods? How do those rates compare to statistically optimal minimax convergence rates (which are known to be achieved by classical statistical methods for learning probability distributions, e.g., based on wavelets \cite{DJKP96} or kernel density estimation \cite{GL11})? What are natural parameterization and penalization schemes for transport maps?

The present paper aims to shed light on these theoretical questions. Our main focus is the well-known setting of nonparametric density estimation; see, e.g., \cite{T08}. Here i.i.d.\ observations $(X_i:1\le i\le N)$ are given from some unknown `ground truth' probability measure $P_0$ with Lebesgue density $p_0$ on some bounded, open domain $U \subseteq \mathbb{R}^d$, and the goal is to infer $p_0$. The class of estimators we consider arises from a \textit{penalized maximum likelihood} objective, given by
\begin{equation}\label{flashy-objective}
\begin{split}
    \hat S\in & \arg\min_{S\in \mathcal S}  \Big\{ -\sum_{i=1}^N \log S^\# \eta (X_i)+\textnormal{pen}(S) \Big\}.
\end{split}
\end{equation}
Here, $\eta$ is a fixed reference density, $\mathcal S$ is a class of \textit{invertible} $C^1$-diffeomorphic transport maps, $S^\#(\cdot)$ denotes the pullback operation, and $\text{pen}:\mathcal S\to [0,\infty)$ is some \textit{penalty functional}; see Section \ref{sec-penalty} below for details. The resulting estimator for $p_0$ is then given as the plug-in $\hat S^\# \eta \approx p_0$. The objective (\ref{flashy-objective}) can equivalently be reformulated as an optimization problem over maps $T=S^{-1}$, replacing the pullback by a pushforward. However, (\ref{flashy-objective}) is used more frequently for computational reasons; we thus follow this convention. Recall also that (\ref{flashy-objective}) corresponds to minimizing an empirical approximation of the Kullback--Leibler divergence, which often appears as a loss function; if $X_1,\dots, X_N \stackrel{\text{i.i.d.}}{\sim} P_0$, then $\textnormal{KL}(p_0 \| S^\# \eta) \approx -\frac 1N\sum_{i=1}^N \log S^\# \eta (X_i) +\int p_0\log p_0$.

\smallskip

\paragraph{Contributions}
Our main contributions are twofold. First, we prove a concentration bound for general transport-based density estimators arising from variational problems of the form \eqref{flashy-objective}; see Theorem \ref{thm-penalty-gen} below. Informally speaking, this result characterizes the convergence rate of $\hat S^\#\eta$ towards $p_0$ in the Hellinger distance based on two fundamental quantities of the pair $(\mathcal S, \textnormal{pen}(\cdot) )$: (i) a metric entropy bound for the model class $\mathcal S$ in some suitable H\"older-type norm, and (ii) the `best approximation' of the data-generating density $p_0$ by an some element $S^*\in\mathcal S$. Our concentration inequality is applicable in principle to any choice of transport class and penalty, and provides an explicit framework for understanding the impact of different choices of $\mathcal S$ and $\textnormal{pen}(\cdot)$. To our knowledge, penalized estimators of transport maps have not been studied in previous work. In proving this result, we adapt classical methods for the study of nonparametric M-estimators based on empirical process theory, developed in \cite{VDG00,VDG01,WS95}, to transport-based estimators of the form (\ref{flashy-objective}), by combining them with analytical properties of the transport-based representation $S\mapsto S^\# \eta=(\eta \circ S) \det \nabla S$ of the probability density of interest; see Section \ref{gen-pfs} and in particular the proof of Theorem \ref{thm-penalty-gen} for details.


Our second main contribution is that we give a comprehensive study of the implications of the general theory for the case of triangular `Knothe--Rosenblatt' (KR) maps of the form
\begin{equation}\label{KR-intro}
    S(x)=\begin{bmatrix*}[l]
	S_1(x_1) \\
	S_2(x_1, x_2) \\
	~~\vdots \\
	S_d(x_1,\dots,x_d)
\end{bmatrix*},
\end{equation}
between $d$-dimensional bounded product domains. In total, three types of concrete estimators are studied: (i) unpenalized nonparametric MLE over bounded H\"older-type balls, (ii) penalized estimators with Tikhonov-type Sobolev norm penalty, and (iii) sieved penalized estimators based on truncated wavelet expansions. While the first estimator might appear simplest on the conceptual level, it may be infeasible to realize computationally; in contrast, the latter two penalization-based methods permit the use of standard optimization methods for unconstrained problems on 
Euclidean spaces. All three estimators are proven to achieve statistically minimax-optimal convergence rates in the Hellinger (and then also $L^2$) distance, demonstrating that transport methods can satisfy the same theoretical guarantees as classical wavelet estimators or kernel density estimators (see, e.g., \cite{DJKP96, GN16, GL11} and references therein). To obtain those minimax-optimal results, we require a detailed characterization of the smoothness properties of KR transport maps between H\"older-regular densities in terms of certain natural \textit{anisotropic} norms (partly noted also in the very recent work \cite{ISPH21}). We then propose a parameterization of triangular maps which naturally captures these anisotropic smoothness properties, and which is used crucially in the construction of penalized estimators (ii) and (iii); see Sections \ref{MLE-res} and \ref{triangle-II} below for details.


\paragraph{Further related work}

Key methodological efforts were reviewed above, so we focus our discussion here on recent theoretical advances related to this paper. The papers \cite{ZM20,zech2022sparse} study the approximation of triangular transport maps between analytic densities, by rational functions and neural networks, on high- and infinite-dimensional bounded domains. \cite{BMZ20} proposes a relatively general parameterization of monotone triangular maps on unbounded domains, and analyzes properties of the optimization problem (\ref{flashy-objective}) (without penalization) under such parameterizations. None of these three papers \cite{ZM20,zech2022sparse,BMZ20} addresses questions of statistical consistency, however. 

The paper \cite{ISPH21} appeared while we were preparing this manuscript. It derives convergence rates for non-penalized maximum likelihood estimators of triangular maps over H\"older-type balls. The setting considered there is similar to ours, but our proof techniques differ substantially: the approach in \cite{ISPH21} is to directly derive a concentration inequality for the Kullback--Leibler objective, while we exploit the connection to maximum likelihood estimation. In fact, our results in Section \ref{MLE-res} improve upon the rates derived in \cite{ISPH21}; see Corollary \ref{cor-MLE} below and the ensuing discussion.




Another active line of work surrounds the statistical estimation of Brenier (optimal transport) maps between two distributions, given independent samples from each. The paper \cite{HR20} establishes minimax upper and lower bounds, under smoothness assumptions on the optimal transport map. 
\cite{MBNW21} build on this work and discuss estimators of the optimal transport map under a variety of smoothness assumptions. Interestingly, one proposed minimax-optimal estimator of the optimal transport map relies on first estimating the target and reference densities using nonparametric kernel or wavelet techniques.
Thus, while related, the setting of these papers differs from ours, since we are interested in estimation of the \textit{target density}, and only secondarily in the map itself (cf.\ Corollary \ref{cor-map} below); the role of the transport map and the reference distribution is to provide a representation of the target. When the reference is known, results in \cite{HR20} (see Remark 5 in that paper) yield estimation rates also for the target in Wasserstein distance. \cite{NB22} also consider the estimation of smooth densities in Wasserstein distance, and propose several constructions that achieve minimax rates in this setting.

\paragraph{Structure of the paper}

Section \ref{MLE-res} first illustrates, in a concrete setting, some of the main intuitions of our approach---which underlie the more abstract conditions laid out in Section \ref{sec-penalty}. In particular, Section \ref{MLE-res} recalls the construction of the Knothe--Rosenblatt rearrangement and derives its basic anisotropic regularity properties, which in turn yield the initial Theorem \ref{thm-anisotropic} demonstrating that unpenalized transport-based density estimators are able to achieve minimax convergence rates. Section \ref{sec-penalty} introduces our main result, specifically Theorem \ref{thm-penalty-gen} establishing concentration bounds for general penalized transport-based density estimators. Section \ref{triangle-II} then studies parameterizations of triangular maps and the implications of our general results for two representative penalized estimation procedures. The proofs are given in Sections \ref{gen-pfs}--\ref{MLE-proofs}.




\section{Triangular maps I: maximum likelihood}\label{MLE-res}
\subsubsection{Construction of Knothe--Rosenblatt maps}
%
To begin, we recall the standard construction of the Knothe--Rosenblatt rearrangement, cf.\ \cite{S15,ZM20}. To limit technical intricacies regarding the regularity properties of such transport maps, in this section we consider the case of product domains. Specifically, let $Q_d=[0,1]^d$ denote the $d$-dimensional unit cube, and suppose that $\nu,\mu$ are two strictly positive densities on $Q_d$. Then there exists a unique triangular map $S_{\nu,\mu}$ satisfying $(S_{\nu,\mu})_\#\nu=\mu$ (where $_\#$ denotes the pushforward operation, which we apply to densities in a slight abuse of notation) such that for all $k=1,\dots,d$, the component function $(S_{\nu,\mu})_k: Q_k \to [0,1]$ is monotone increasing in the $k$-th variable. The map is constructed as follows. Writing $x_{1:k}=(x_1,\dots, x_k)\in Q_k$, we define the marginal densities
	\begin{equation}\label{eta-def}
		\begin{split}
			\tilde \nu_k (x_{1:k})&= \int_{Q_{d-k}} \nu(x_{1:k},y_{1:d-k}) dy_{1:d-k}, ~~~1\le k \le d,\\
		\end{split}
	\end{equation}
while for $k=0$ we set $\tilde \nu_0\equiv 1$. Then, define the conditional densities of the $k$-th variable given the first $k-1$ variables by
\begin{equation}\label{nu-k-def}
\nu_k(x_{1:k})= \frac{\tilde \nu_k(x_{1:k})}{\tilde \nu_{k-1}(x_{1:k-1})}, ~~~ 1\le k \le d.
\end{equation}
Integrating $\nu_k (x_{1:k})$ along $x_k$ gives rise to the corresponding conditional cumulative distribution functions (CDFs)
	\begin{equation}\label{Fdef}
		F_{k,x_{1:k-1}}^\nu:[0,1]\to [0,1],~~~ F_{k,x_{1:k-1}}^\nu(x_k)\coloneqq \int_{0}^{x_k}\nu_k(x_{1:k-1},y)dy. 
	\end{equation}
For $\mu$, we analogously define the marginal conditional densities $\mu_k(x_{1:k})$, $1\le k\le d$, with CDFs $F^\mu_{k,x_{1:k-1}}$.

For notational convenience, let us write $S=S_{\nu,\mu}$. Then, the first component $S_1$ of $S$ is given by
\begin{equation*}
	\begin{split}
		S_1(x_1)\coloneqq \big[(F_{1}^\mu)^{-1}\circ F_{1}^\nu \big](x_1).
	\end{split}
\end{equation*}
The remaining components $S_2,\dots ,S_d$ are defined recursively: given $S_1,\dots,S_{k-1}$ and writing
\[S_{1:k-1}(x_{1:k-1})\equiv \begin{bmatrix*}[l]S_1(x_1)\\ ~~ \vdots\\ S_{k-1}(x_1,\dots,x_{k-1})\end{bmatrix*}:Q_{k-1}\to Q_{k-1},\]
we define
\begin{equation}\label{Tk-def}
	\begin{split}
		S_k(x_{1:k})\coloneqq \big[\big(F^\mu_{k,S_{1:k-1}(x_{1:k-1})}\big)^{-1}\circ F_{k,x_{1:k-1}}^{\nu} \big] (x_k).
	\end{split}
\end{equation}
Monotonicity of each component $S_k$---specifically, the property that $\partial_{x_k} S_k > 0$ for all $x_{1:k} \in Q_k$---follows from the fact that $F_{k,x_{1:k-1}}^{\nu}$ and $F^\mu_{k,S_{1:k-1}(x_{1:k-1})}$ are strictly increasing functions for strictly positive densities $\nu, \mu$. We call $S=S_{\nu,\mu}$ the \textit{Knothe--Rosenblatt (KR) rearrangement} 
from $\nu$ to $\mu$. 

It is useful to note that for $d=1$, the KR rearrangement constructed above coincides with the optimal transport map for any strictly convex transport cost \cite[Thm.\ 2.9]{S15}; for the atomless and fully supported measures considered here, both are explicitly given by the composition $(F^\mu)^{-1}\circ F^\nu$ of the CDFs. As is apparent from \eqref{Tk-def}, the KR map essentially generalizes this increasing rearrangement to $d \geq 2$ dimensions, by recursively applying it to a sequence of marginal conditional distributions. \ssw{Another link between the KR map and optimal transport is described by \cite{carlier2010}, who show that the KR map is the limit of a sequence of optimal transport maps obtained with increasingly anisotropic quadratic costs.} 



\subsubsection{Anisotropic regularity of KR maps} 

The explicit construction of KR maps allows us to study regularity estimates for $S_{\nu,\mu}$ directly from the definitions (while deriving regularity properties for optimal transport maps between probability measures is more intricate, see, e.g., \cite{V09,F17}). It is well known that a KR map $S_{\nu,\mu}$ possesses the same regularity as that of the densities $\nu,\mu$ it couples \cite{S15}. However, we require a slightly refined result, Proposition \ref{prop-fact}, which characterizes an \textit{anisotropic} class of transport maps corresponding to `$\alpha$-smooth' reference and target distributions. From a statistical point of view, this is key to inform an effective choice of $\mathcal S$ in \eqref{flashy-objective} over which to optimize.

\smallskip 

For any open subset $U\subseteq \R^d$ and integer $\alpha>0$, let $C^\alpha(U)$ denote the space of real-valued $\alpha$-times differentiable functions on $U$ with uniformly continuous derivatives, normed by
\[ \|f\|_{C^\alpha(U)}\coloneqq\sum_{0\le|\beta|\le \alpha}\|\partial_\beta f\|_\infty.\]
For non-integer $\alpha>0$, we denote by $C^\alpha(U)$ the space of $\alpha$-H\"older continuous functions with norm
\[ \|f\|_{C^\alpha(U)}\coloneqq\sum_{0\le|\beta|\le \alpha}\|\partial_\beta f\|_\infty+\sum_{|\beta| = \lfloor \alpha\rfloor}\sup_{x\neq y}\frac{|\partial_\beta f(x)-\partial_\beta f(y)|}{|x-y|^{\beta-\lfloor \beta\rfloor}}.\]
Moreover, for $k\ge 1$, we denote the corresponding vector-valued H\"older spaces mapping into $\R^k$ and $Q_k$ by $C^\alpha(U, \R^k)$ and $C^\alpha(U, Q_k)$, respectively. (The notation $C^\alpha(U)$ always refers to the real-valued case $k=1$.)
We write $f\in C^\alpha(\bar U)$ for any $f\in C^\alpha(U)$ which (along with all its derivatives) possess continuous extensions to $\bar U$.

\smallskip

Now, for any $0<c<B<\infty$ and $\alpha >0$, let us define the regularity class of densities
\begin{equation}\label{Mdef}
	\mathcal M(\alpha,B,c)= \Big\{ \nu\in C^\alpha(Q_d):~ \nu\ge c, ~\|\nu\|_{C^\alpha} \le B,~\int_{Q_d} \nu =1 \Big\},
\end{equation}
along with the subclass of densities which factorize into $\alpha$-smooth marginals,
\begin{equation}\label{M-fact}
	\tilde{\mathcal M}(\alpha,B,c)= \Big\{ \nu\in \mathcal M(\alpha,B,c):~\nu(x)= \Pi_{k=1}^d v_k(x_k)~\textnormal{for some}~ \|v_k\|_{C^\alpha([0,1],\R)}\le B \Big\}.
\end{equation}
While $\mathcal M(\alpha,B,c)$ is a natural class in which to model the `ground truth' density (which here is taken to be $\nu$), to limit technicalities we will assume the reference density $\mu$---which we may choose arbitrarily---to belong to $\tilde{\mathcal M}(\alpha,B,c)$.

As can be seen from the construction preceding (\ref{Tk-def}), intuitively the $k$-th component of a KR map $S_{\nu,\mu}$ should possess higher regularity in the $k$-th variable than in the previous variables, owing to the integration in (\ref{Fdef}). Formally, for $L>0, c_{\text{mon}}>0$ and integer $\alpha \ge 1$, let us define the regularity class
\begin{equation}\label{KR}
	\begin{split}
		\text{KR}(\alpha, L, c_{\text{mon}})\coloneqq \Big\{ S\in C^\alpha(Q_d,Q_d)&~\textnormal{bijective and triangular s.t.}~\forall \, 1\le k\le d:\\
	    &\max\big( \|S_k\|_{C^\alpha(Q_k)},\|\partial_kS_k\big\|_{C^\alpha(Q_k)}\big)\le L,\\
	    &\inf_{x_{1:k}\in Q_k} \partial_kS_k(x_{1:k})>c_{\text{mon}} \Big\}.
	\end{split}
\end{equation}
Here, $c_{\text{mon}}>0$ is a lower bound for the least amount by which each component map $S_k$ must be increasing in the $k$-th variable.

\begin{prop}\label{prop-fact}
   For any $\alpha\ge 1$ and constants $0<c<B<\infty$, there exist $L,c_{\text{mon}}>0$ (depending only on $c,B,d,\mathcal O$) such that for any $\nu\in \mathcal M(\alpha,B,c)$ and $\mu\in \tilde{\mathcal M}(\alpha,B,c)$, we have $S_{\nu,\mu}\in \KR(\alpha,L,c_{\text{mon}})$.
\end{prop}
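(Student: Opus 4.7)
The plan is to exploit the assumed factorization $\mu = \prod_{k=1}^d v_k(x_k)$ of the reference, which dramatically simplifies the recursive construction \eqref{Tk-def}. Indeed, when $\mu$ factorizes, its $k$-th marginal conditional density is simply $\mu_k(x_{1:k}) = v_k(x_k)$, so the conditional CDF $F^\mu_{k,z_{1:k-1}}$ does not depend on $z_{1:k-1}$ and equals the univariate map $G_k(x_k) := \int_0^{x_k} v_k(y)\,dy$. Thus \eqref{Tk-def} collapses to $S_k(x_{1:k}) = G_k^{-1}\!\circ F^\nu_{k,x_{1:k-1}}(x_k)$; in particular each $S_k$ is expressed directly in terms of $\nu$ and $v_k$, with no dependence on the other components $S_j$, and the induction on $k$ that would otherwise be needed disappears. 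Note also that $\nu = \prod v_k \geq c$ together with $\|v_j\|_\infty \leq \|v_j\|_{C^\alpha} \leq B$ forces $v_k \geq c/B^{d-1}$ pointwise.

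Next I would establish the quantitative H\"older bounds on the building blocks associated with $\nu$. Since integration over a bounded set preserves $C^\alpha$-regularity with the same norm, the marginal $\tilde{\nu}_k$ defined in \eqref{eta-def} satisfies $\|\tilde{\nu}_k\|_{C^\alpha(Q_k)} \leq B$ together with $\tilde{\nu}_k \geq c$. A standard multi-index Leibniz/Fa\`a di Bruno bound applied to the quotient \eqref{nu-k-def}, whose denominator is bounded below by $c$, then yields $\|\nu_k\|_{C^\alpha(Q_k)} \leq L_1$ and $\nu_k \geq c/B$ for some $L_1 = L_1(B,c,d)$. The key \emph{anisotropic} observation is that while $F^\nu_{k,x_{1:k-1}}(x_k)$ is only $C^\alpha$ jointly in $x_{1:k}$, its partial with respect to $x_k$ is exactly $\nu_k$ and is therefore again $C^\alpha$, so $\|\partial_{x_k} F^\nu_k\|_{C^\alpha(Q_k)} \leq L_1$. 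For the reference side, $G_k$ is univariate with $G_k' = v_k \geq c/B^{d-1} > 0$; the inverse function theorem together with a Fa\`a di Bruno-type induction on $\lfloor \alpha \rfloor$ gives $G_k^{-1} \in C^{\alpha+1}([0,1])$ with norm bounded by some $L_2 = L_2(B,c,d)$.

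I would then assemble the bound on $S_k$ by invoking the composition rule for H\"older spaces, which yields $\|S_k\|_{C^\alpha(Q_k)} \leq L_3(L_1,L_2,\alpha,d)$. For the anisotropic derivative bound, differentiate to obtain
\[
\partial_{x_k} S_k(x_{1:k}) = (G_k^{-1})'\!\bigl(F^\nu_{k,x_{1:k-1}}(x_k)\bigr)\cdot \nu_k(x_{1:k}),
\]
and control both factors in $C^\alpha$ via the same product/composition estimates: $(G_k^{-1})'$ is $C^\alpha$ since $G_k^{-1} \in C^{\alpha+1}$, while $\nu_k \in C^\alpha$ by the previous step. The monotonicity lower bound then drops out of the same formula: $(G_k^{-1})'(y) = 1/v_k(G_k^{-1}(y)) \geq 1/B$ and $\nu_k \geq c/B$, so $\partial_{x_k} S_k \geq c/B^2$; setting $c_{\text{mon}} = c/(2B^2)$ gives the strict inequality required in \eqref{KR}. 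Taking $L = \max_k L_3$ (which can be chosen uniformly in $k$ since every constant depends only on $B,c,d,\alpha$) concludes $S_{\nu,\mu}\in \KR(\alpha,L,c_{\text{mon}})$.

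The principal technical obstacle is the bookkeeping of H\"older norms under the three operations of (i) dividing by a strictly positive $C^\alpha$ function, (ii) inverting a univariate $C^{\alpha+1}$ bijection whose derivative is bounded away from zero, and (iii) composing a univariate function with a multivariate one, all for possibly non-integer $\alpha$. Each of these is classical but requires a Fa\`a di Bruno-style expansion, which I would either cite or prove by induction on $\lfloor \alpha \rfloor$. What makes the overall argument clean is precisely the factorization hypothesis on $\mu$: it removes the dependence of $F^\mu_k$ on $S_{1:k-1}$, and hence prevents any cascading of regularity losses across the components of the KR map.
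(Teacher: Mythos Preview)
Your proposal is correct and follows essentially the same route as the paper's proof: both exploit the factorization of $\mu$ to reduce $S_k$ to the composition $(F^\mu_k)^{-1}\circ F^\nu_{k,x_{1:k-1}}$ with a univariate outer map, establish $C^{\alpha+1}$-regularity of the inverse CDF via a bootstrapping/Fa\`a di Bruno argument, and then read off the $C^\alpha$ bounds on $S_k$ and $\partial_k S_k$ from composition and product estimates. Your version is slightly more explicit about tracking constants (e.g., $v_k\ge c/B^{d-1}$, $\nu_k\ge c/B$, $c_{\text{mon}}=c/(2B^2)$), but the structure and key ideas are the same.
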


The anisotropic regularity in $\KR(\alpha,L,c_{\text{mon}})$ is `sharp' insofar as one may also show the converse of the above proposition: namely that pullbacks of $\alpha$-smooth densities under transports from $\KR(\alpha,L,c_{\text{mon}})$ are again $\alpha$-smooth.
\begin{prop}\label{prop-sharp} For any $L,c_{\text{mon}}>0$, and $0<c<B<\infty$, there exist constants $\tilde c, \tilde B$ such that for every $\mu \in \tilde{\mathcal M}(\alpha,B,c)$ and every $S\in \KR(\alpha,L,c_{\text{mon}})$, \[S^\#\mu \in \mathcal M(\alpha,\tilde B,\tilde c).\]
\end{prop}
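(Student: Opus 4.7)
The plan is to start from the change-of-variables identity and to exploit the special structure of both $S$ and $\mu$. Since $S$ is triangular, $\det \nabla S(x) = \prod_{k=1}^d \partial_k S_k(x_{1:k})$; combined with $\mu(y) = \prod_{k=1}^d v_k(y_k)$, this yields the explicit product representation
\[
S^\# \mu(x) \;=\; \prod_{k=1}^d v_k\bigl(S_k(x_{1:k})\bigr)\, \partial_k S_k(x_{1:k}),
\]
consisting of $2d$ factors, each depending only on the first $k$ coordinates. Membership in $\mathcal M(\alpha,\tilde B,\tilde c)$ then reduces to three separate checks: integration to one, a uniform pointwise lower bound, and a $C^\alpha(Q_d)$ upper bound.

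The first two are immediate. Integration to one follows from the change-of-variables formula applied to the bijection $S:Q_d\to Q_d$ together with $\int_{Q_d}\mu = 1$. For the pointwise lower bound, $\mu \geq c$ and $\partial_k S_k \geq c_{\text{mon}}$ give $S^\#\mu(x) \geq c\, c_{\text{mon}}^d$, so one may take $\tilde c := c\, c_{\text{mon}}^d$.

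The bulk of the work is the $C^\alpha$ upper bound on the product above. My strategy is to bound each factor in $C^\alpha$ separately and then iterate the Banach algebra property $\|fg\|_{C^\alpha(Q_d)} \leq C(\alpha,d)\|f\|_{C^\alpha(Q_d)}\|g\|_{C^\alpha(Q_d)}$. The anisotropic definition of $\KR(\alpha,L,c_{\text{mon}})$ is designed precisely to make this possible: it gives $\|\partial_k S_k\|_{C^\alpha(Q_k)}\leq L$ \emph{directly}, so the Jacobian factors are available at the full smoothness order $\alpha$. Without the anisotropy (i.e., if we only had $\|S_k\|_{C^\alpha(Q_k)}\leq L$), then $\partial_k S_k$ would at best lie in $C^{\alpha-1}$ and the argument would lose a derivative. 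This is the essential place where the anisotropy is used.

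The remaining ingredient is a composition estimate for each $v_k \circ S_k$: from $\|v_k\|_{C^\alpha([0,1])}\leq B$ and $\|S_k\|_{C^\alpha(Q_k)}\leq L$, one should obtain $\|v_k \circ S_k\|_{C^\alpha(Q_k)}\leq C'(\alpha,d,B,L)$. This is the main technical (but routine) obstacle: for integer $\alpha$, which is the setting of the $\KR$ class, it follows from the multivariate chain rule (Fa\`a di Bruno) together with the Leibniz rule, bounding every mixed partial of order at most $\alpha$ in terms of finite sums of products of derivatives of $v_k$ and $S_k$; I would either cite a standard reference or establish this as a preliminary lemma with constants tracked explicitly. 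Trivial extension of each factor to all of $Q_d$ (constantly in the missing coordinates) preserves the $C^\alpha$ norm, and iterating the algebra inequality across the $2d$ factors produces the desired bound $\tilde B$ depending only on $\alpha,d,B,L,c_{\text{mon}}$.
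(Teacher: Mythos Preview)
Your proposal is correct and follows essentially the same approach as the paper: both arguments write $S^\#\mu = (\mu\circ S)\prod_{k=1}^d \partial_k S_k$, invoke the Banach algebra inequality $\|fg\|_{C^\alpha}\lesssim \|f\|_{C^\alpha}\|g\|_{C^\alpha}$ for the product, and a Fa\`a di Bruno--type composition bound (the paper's Lemma~\ref{faadibruno}) for the outer composition, with the anisotropic control $\|\partial_k S_k\|_{C^\alpha}\le L$ supplying the crucial regularity of the Jacobian factors. The only cosmetic difference is that you exploit the factorization $\mu=\prod_k v_k$ to reduce to one-dimensional compositions $v_k\circ S_k$, whereas the paper applies the composition lemma directly to the multivariate $\mu\circ S$; both are equivalent here and yield the same conclusion.
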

The proofs of the preceding results follow straightforwardly from the definitions, and can be found in Section \ref{MLE-proofs} below. Note that requiring $S$ to merely belong to $C^\alpha$ in Proposition \ref{prop-sharp} may \textit{not} be enough to guarantee that $S^\#\mu \in C^\alpha$. 
In this sense, while elementary, the above propositions provide crucial insight to inform which classes of triangular transports are able to achieve statistically optimal convergence rates.

\subsubsection{Statistical convergence rates} 

We now present a first theorem which demonstrates that maximum likelihood estimators based on the anisotropic classes of triangular maps described above achieve minimax rates of convergence for H\"older-smooth densities. Consider the classical nonparametric density estimation setup, in which we observe independent, identically distributed (i.i.d.) data
\[ X_1,\dots, X_N \in Q_d,~~~ X_i \stackrel{\text{i.i.d.}}{\sim} P_0, \]
from some unknown probability measure $P_0$ on $Q_d$ (whose $n$-fold product measure we denote by $P_0^N$). Define the following negative log-likelihood objective over a class of transport maps $\mathcal S$,
\begin{equation}\label{objective}
    \mathcal J_N(S)\coloneqq-\sum_{i=1}^N\log S^\#\eta (X_i)=-\sum_{i=1}^N\log [\eta(S(X_i))\det \nabla S(X_i)],~~~ S\in \mathcal S.
\end{equation}
Using Propositions \ref{prop-fact} and \ref{prop-sharp}, we obtain the following concentration inequality for density estimators $\hat S^\# \eta$ arising from $(\ref{objective})$. We denote the usual Hellinger distance between any two densities $p,\tilde p\in L^1(Q_d)$ by
\[ h(p,\tilde p) = \Big(\int_{Q_d} \big( \sqrt p- \sqrt{\tilde p} \big)^2 dx\Big)^{1/2}. \]

\begin{thm}\label{thm-anisotropic}
    Let $0<c<B<\infty$ and $\alpha>d/2$, and fix any reference density $\eta \in \tilde{\mathcal M}(\alpha,B,c)$. Then there are constants $L,c_{\text{mon}}>0$ and $C>0$ such that for any $p_0\in \mathcal M(\alpha,B,c)$, any $N\ge 1$ and any minimizer
    \[ \hat S\in \arg\min_{S\in \mathcal S}\mathcal J_N(S) ~~\textnormal{with}~~\mathcal S= \KR(\alpha,L,c_{\text{mon}}), \] 
    we have the concentration inequality
    \[ P_0^N\big( h(\hat S^\#\eta,p_0)\ge CN^{-\frac{\alpha}{2\alpha+d}} \big)\le C\exp\Big( -\frac {N^{d/(2\alpha+d)}}{C}  \Big). \]
\end{thm}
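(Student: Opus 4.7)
The plan is to treat Theorem \ref{thm-anisotropic} as a classical nonparametric maximum likelihood concentration result for the class of pullback densities
\[
\mathcal{P} := \{ S^\# \eta : S \in \mathcal S \}, \qquad \mathcal S = \KR(\alpha, L, c_{\text{mon}}),
\]
and to invoke a standard empirical-process bound (Wong--Shen \cite{WS95}, or \cite[Ch.\ 7]{VDG00}). Such a bound requires (i) the approximation error $\inf_{p \in \mathcal{P}} h(p, p_0)$, and (ii) a bracketing entropy estimate $\log N_{[\,]}(\epsilon, \mathcal{P}, h)$. Propositions \ref{prop-fact} and \ref{prop-sharp} supply both. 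Applying Proposition \ref{prop-fact} with $\nu = p_0$ and $\mu = \eta$, for $L$ sufficiently large and $c_{\text{mon}}$ sufficiently small (depending only on $\alpha, B, c, d$), the KR rearrangement $S^\star := S_{p_0, \eta}$ lies in $\mathcal S$; since $(S^\star)^\# \eta = p_0$ by construction, the approximation error is identically zero, and the entire burden of the proof falls on controlling stochastic fluctuation.

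Next I would derive the bracketing entropy bound. Proposition \ref{prop-sharp} implies that $\mathcal{P}$ is contained in a fixed H\"older ball $\mathcal M(\alpha, \tilde B, \tilde c)$ for constants $\tilde B, \tilde c$ depending on $\alpha, L, c_{\text{mon}}, B, c$. Elements of $\mathcal{P}$ are thus $\alpha$-H\"older densities, uniformly bounded above and below on $Q_d$. Kolmogorov--Tikhomirov entropy estimates for H\"older balls in $C^\alpha(Q_d)$ give a sup-norm bound of order $\epsilon^{-d/\alpha}$, and the uniform lower envelope $\tilde c > 0$ converts sup-norm brackets into $h$-brackets with only multiplicative constants, yielding
\[
\log N_{[\,]}(\epsilon, \mathcal{P}, h) \;\lesssim\; \epsilon^{-d/\alpha}.
\]

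The concentration inequality then follows from the standard MLE peeling and chaining argument. The critical radius $\epsilon_N$ is fixed by the balance
\[
\int_0^{\epsilon_N} \sqrt{\log N_{[\,]}(\epsilon, \mathcal{P}, h)}\, d\epsilon \;\asymp\; \sqrt N \, \epsilon_N^2,
\]
which, with $\log N_{[\,]} \asymp \epsilon^{-d/\alpha}$, requires $d/(2\alpha) < 1$, i.e.\ exactly the hypothesis $\alpha > d/2$, for integrability at the origin. Solving the balance gives $\epsilon_N \asymp N^{-\alpha/(2\alpha+d)}$, and the sub-Gaussian tail built into the empirical-process bound produces $P_0^N(h(\hat S^\# \eta, p_0) \ge C \epsilon_N) \le C \exp(-N \epsilon_N^2 / C) = C \exp(-N^{d/(2\alpha+d)}/C)$, which is the stated claim.

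The main obstacle, as I see it, is not the empirical-process machinery itself but the translation between the two levels at which smoothness is measured: maps $S$ live naturally in the anisotropic H\"older-type class $\KR(\alpha, L, c_{\text{mon}})$, while the MLE theory wants entropy for the induced class of densities in Hellinger distance. Passing from one to the other requires checking that the nonlinear map $S \mapsto S^\# \eta = (\eta \circ S) \det \nabla S$ is Lipschitz between appropriate norms on bounded subsets of $\mathcal S$; the uniform bounds $\partial_k S_k \ge c_{\text{mon}}$ and $\eta \ge c$ are exactly what allow $\log(\eta \circ S)$ and $\log \det \nabla S$ to be controlled. These constants must also be threaded carefully so that the \emph{same} $L, c_{\text{mon}}$ are simultaneously compatible with Proposition \ref{prop-fact} (giving $p_0 \in \mathcal{P}$) and Proposition \ref{prop-sharp} (giving the entropy bound), which dictates the quantifier order in the theorem's statement.
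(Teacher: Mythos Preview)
Your proposal is correct and follows essentially the same route as the paper: use Proposition~\ref{prop-fact} to place $p_0$ inside the model class (zero approximation error), use Proposition~\ref{prop-sharp} to embed the pullback densities into a fixed $C^\alpha$-ball, apply the Kolmogorov--Tikhomirov entropy bound $\lesssim \epsilon^{-d/\alpha}$ (the paper cites \cite{NP07}), and then invoke the standard MLE concentration result (packaged in the paper as Proposition~\ref{sara}, i.e., Theorem~10.13 of \cite{VDG00}). Your final paragraph's worry about Lipschitz continuity of $S\mapsto S^\#\eta$ is unnecessary for this particular theorem---that estimate (Lemma~\ref{lem-fwd}) is used only in the general penalized setting of Theorem~\ref{thm-penalty-gen}, whereas here Proposition~\ref{prop-sharp} gives the required entropy bound for the density class directly.
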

The assumption for $p_0$ to be lower bounded across $Q_d$ is made here to ensure regularity properties of the KR-rearrangement $S_{p_0,\eta}$, while the assumption $\alpha>d/2$ is needed for standard entropy integrals of H\"older-type spaces \cite{NP07} to converge. Relaxing those assumptions may be possible at the expense of further technical intricacies, but we do not pursue this here.

Since $p_0\ge c$ and $\sqrt x \lesssim x$ on $[c,\infty)$ we immediately obtain the same rate of convergence for the $L^2$-distance,
\begin{equation}\label{L2-rate}
    \|\hat S^\#\eta-p_0\|_{L^2(Q_d)} = O_{P_0^N}(N^{-\frac\alpha{2\alpha+d}}),
\end{equation}
as well as the total variation (i.e., $L^1$) distance. The preceding convergence rates correspond to the optimal minimax $L^p$-risk of estimation of $\alpha$-smooth densities on bounded domains \cite{DJKP96, DJ98, GL14}.

Our results above also imply the following bound for the Kullback--Leibler risk. It follows essentially from the fact that for classes of upper and lower bounded densities, the KL divergence behaves like the \textit{squared} Hellinger distance.
\begin{cor}\label{cor-MLE}
In the setting of Theorem  \ref{thm-anisotropic}, we have that
\begin{equation}\label{KL-rate}
    \textnormal{KL}(p_0,\hat S^\# \eta) = O_{P_0^N}(N^{-\frac{2\alpha}{2\alpha+d}}).
\end{equation} 
\end{cor}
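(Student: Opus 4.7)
The plan is to deduce the KL rate from the Hellinger bound in Theorem \ref{thm-anisotropic} by combining it with a uniform two-sided bound on the estimator $\hat S^\#\eta$, which then allows one to replace the KL divergence by the squared Hellinger distance at the cost of a multiplicative constant. The only non-routine ingredient is making sure the constant in this comparison does not depend on the data, and here Proposition \ref{prop-sharp} does all the work.

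First, I would invoke Proposition \ref{prop-sharp} to control $\hat S^\#\eta$ pointwise: since $\hat S\in \KR(\alpha,L,c_{\text{mon}})$ and $\eta\in\tilde{\mathcal M}(\alpha,B,c)$, the proposition provides constants $\tilde c,\tilde B>0$, depending only on $\alpha,L,c_{\text{mon}},B,c,d$, such that $\hat S^\#\eta \in \mathcal M(\alpha,\tilde B,\tilde c)$. In particular
\[ \tilde c \le \hat S^\#\eta(x) \le \tilde B \qquad \text{for all } x\in Q_d, \]
and this bound is uniform over every realization of the data and every admissible minimizer $\hat S$. Combined with $c\le p_0\le B$, this yields the deterministic likelihood-ratio bound $\|p_0/\hat S^\#\eta\|_\infty \le B/\tilde c=:M$.

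Next, I would apply the standard comparison inequality between KL divergence and squared Hellinger distance (e.g., Lemma~8.3 in \cite{GN16}): for any probability densities $p,q$ with $\|p/q\|_\infty \le M<\infty$ one has
\[ \textnormal{KL}(p,q) \le C(M)\, h^2(p,q), \]
for some constant $C(M)$ depending only on $M$. Applied with $p=p_0$ and $q=\hat S^\#\eta$, the previous paragraph gives
\[ \textnormal{KL}(p_0, \hat S^\#\eta) \;\le\; C(B/\tilde c)\, h^2(p_0,\hat S^\#\eta). \]

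Finally, Theorem \ref{thm-anisotropic} yields $h(p_0,\hat S^\#\eta) = O_{P_0^N}(N^{-\alpha/(2\alpha+d)})$; squaring and combining with the last display gives
\[ \textnormal{KL}(p_0,\hat S^\#\eta) = O_{P_0^N}\!\big(N^{-2\alpha/(2\alpha+d)}\big), \]
which is the claimed rate \eqref{KL-rate}. There is no real obstacle: everything reduces to the uniform two-sided bound on $\hat S^\#\eta$, and this is exactly the content of Proposition \ref{prop-sharp}.
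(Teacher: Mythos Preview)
Your proposal is correct and follows essentially the same route as the paper: the paper remarks that the proof ``follows the same lines as that of Corollary~\ref{thm-KL},'' which likewise establishes uniform two-sided bounds on $\hat S^\#\eta$ (here via Proposition~\ref{prop-sharp}, exactly as you do) and then uses that KL is dominated by the squared Hellinger distance for uniformly bounded density ratios. The only cosmetic difference is that the paper's proof of Corollary~\ref{thm-KL} passes through the intermediate $L^2$ bound via a Taylor expansion of $\log$, whereas you invoke the direct $\textnormal{KL}\lesssim h^2$ comparison; both are standard and equivalent in this setting.
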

Since the proof of Corollary \ref{cor-MLE} follows the same lines as that of Corollary \ref{thm-KL}, it is omitted. Note that in the limit of the smoothness $\alpha$ tending to infinity, the rate exponent in (\ref{KL-rate}) satisfies $\frac{2\alpha}{2\alpha+d}\to 1$. The very recent preprint \cite{ISPH21} studies nonparametric transport-based MLEs over anisotropic classes which are very similar to (\ref{KR}), under Kullback--Leibler loss. In the regularity regime $\alpha >d/2$ considered here, their results yield the bound $\textnormal{KL}(p_0,\hat S^\# \eta) = O_{P_0^N} (N^{-1/2})$ (see Theorem 3.5 in \cite{ISPH21}), which is slower by a polynomial factor than our rate $N^{-\frac{2\alpha}{2\alpha+d}}$. The key difference between their proof strategy and ours (and a possible source of suboptimality) is that they study directly the concentration of the KL objective around its population version, rather than employing the connection to maximum likelihood estimation; we refer to \cite{ISPH21} and Section \ref{MLE-proofs} for details.

\section{General penalized estimators}\label{sec-penalty}

Our first Theorem \ref{thm-anisotropic} is specific to maximum-likelihood type estimators over the anisotropic class $\KR (\alpha,L,c_{\text{mon}})$ from (\ref{KR}). While this is of theoretical appeal, in practice it may be challenging to optimize over the  infinite-dimensional bounded convex set $\KR (\alpha,L,c_{\text{mon}})$, and it may be more favorable to employ various re-parameterizations and finite-dimensional discretizations of transport maps instead.

The goal of this section is to provide a convergence result, Theorem \ref{thm-penalty-gen}, for a general class of transport-based estimators. The estimators arise as minimizers of objective functions which may include a penalty term, where the penalty acts on some parameter $\theta\in \Theta$ indexing a set of transport maps. This formulation permits the analysis of estimators based on reformulations of constrained optimization problems (such as over $\KR (\alpha,L,c_{\text{mon}})$) as numerically more appealing unconstrained objectives. Concrete examples will be provided in Section \ref{triangle-II}.

Since our results in this section apply in principle also to classes of non-triangular transport maps, and in particular do not hinge on the specific regularity properties from Proposition \ref{prop-fact} valid on product domains, we consider here a setup where the respective supports of the data-generating density and reference $\eta$ are not necessarily product domains. For some (and not necessarily identical) bounded connected open sets $U,V\subseteq \R^d$, suppose that $P_0$ is a Borel probability measure on $U$ with Lebesgue density $p_0:U\to (0,\infty)$ generating data
\begin{equation}\label{data}
    X_1,\dots,X_N \stackrel{\text{i.i.d.}}{\sim} P_0,
\end{equation}
and that $\eta:V\to (0,\infty)>0$ is some reference density.

\smallskip

Now, suppose that $\Theta$ is any parameter set which indexes a set of diffeomorphisms $S_\theta: U\to V$, which we denote by
\[\mathcal S\coloneqq\{S_\theta:~\theta\in \Theta\}.\]
Then, for some penalty functional $\textnormal{pen}:\Theta \to \R \cup \{ +\infty \}$ and scalar `regularization parameter' $\lambda>0$, we consider the objective
\begin{equation}\label{JN}
    \mathcal J (\theta) \equiv \mathcal J_{N,\lambda}(\theta) = - \frac 1N\sum_{i=1}^N \log\big[ \eta(S_\theta(X_i)) \, \det \nabla S_\theta(X_i)\big] + \lambda^2\textnormal{pen}(\theta)^2.
\end{equation}
Note that the penalty in (\ref{JN}) acts on the parameter $\theta$ directly instead of the transport map $S_\theta$. We introduce this more general formulation to permit later also the study of penalized estimators with parameterizations $\theta\mapsto S_\theta$ which are not necessarily unique, but which may be attractive from a practical numerical standpoint, e.g., the rational parameterizations detailed in Section \ref{triangle-II} below, or neural network parameterizations \cite{PPM17,ZM20}.

As seen in the previous section, for triangular maps the `diagonal' derivatives play a distinguished role in characterizing regularity properties, and accounting for this fact is key for obtaining sharp convergence rates. To this end, for any multivariate function $S\in C^1(U,\R^d)$, we define the anisotropic norm
\[ \|S\|_{C^1_{\text{diag}}}\coloneqq\sum_{k=1}^d \|S_k\|_\infty + \sum_{k=1}^d \|\partial_kS_k \|_\infty. \]
The following assumption, which distinguishes between the triangular case and the general case, entails some important `global' boundedness conditions which we require to hold for the class of transport maps $\mathcal S$ induced by the parameter space $\Theta$.

\begin{ass}\label{generic-assumption}
Suppose that either of the following two statements holds.
\begin{enumerate}[label=\bfseries(\roman*)]
\item $\mathcal S\subseteq C^1(U,V)$ is a class of diffeomorphisms satisfying that for some $M>0$,
\begin{equation}\label{C1-bd}
    \sup_{S\in\mathcal S} \|S\|_{C^{1}}\le M~~~\textnormal{and}~~~     \inf_{S\in\mathcal S}\inf_{x\in U}\det \nabla S(x) \ge M^{-1}.
\end{equation}
    \item $\mathcal S\subseteq C^1(U,V)$ is a class of \emph{triangular} maps such that for some $M>0$,
\begin{align}
    \label{C1-diag-bd}
    \sup_{S\in\mathcal S} \|S\|_{C^{1}_{\text{diag}}}&\le M,\\
    \inf_{S\in\mathcal S}\inf_{x\in Q_d}\Pi_{k=1}^d\partial_kS_k(x_{1:k}) &\ge M^{-1}.\label{general-lb}
\end{align}
\end{enumerate}
\end{ass}

For any $\theta\in\Theta$ and Lebesgue probability density $p:U\to [0,\infty)$, define the functional
\begin{equation}\label{tau-def}
    \tau^2(\theta,p)\coloneqq h^2(S_\theta^\#\eta,p)+\lambda^2 \textnormal{pen}(\theta)^2.
\end{equation}
For any element $\theta_*\in \Theta$, define the classes
\begin{align}
    \Theta_*(\lambda,R)& \coloneqq \{\theta\in \Theta: \tau^2(\theta,S_{\theta_*}^\#\eta)\le R^2 \},\\
    ~~~~~\mathcal S_*(\lambda,R)& \coloneqq\{S_\theta: \theta\in \Theta_*(\lambda,R)\}. \label{theta-*}
\end{align}
In what follows, assume that a minimizer of (\ref{JN}) over $\mathcal S$ exists, and let $\hat \theta$ denote any minimizer (else if $\arg\min_{\theta\in \Theta}\mathcal J_{N,\lambda}= \emptyset$ the theorem holds true for approximate minimizers). For any metric space $(X,d)$, subset $A\subseteq X$, and $\rho>0$, we denote by $H(A,d,\rho)$ the usual metric entropy of the set $A$ (i.e., the logarithm of the size of a minimal $\rho$-covering of $A$).

\begin{thm}\label{thm-penalty-gen}
Suppose that the data are given as in (\ref{data}), and that Assumption \ref{generic-assumption} holds. In the case \textbf{(i)} define $|\cdot|\coloneqq \|\cdot\|_{C^1}$, whereas in the case \textbf{(ii)} define $|\cdot|\coloneqq \|\cdot\|_{C^1_{\text{diag}}}$. Assume moreover that for some constants $K,\tilde K>0$, we have that
\[ p_0\le K, ~~~ \tilde K^{-1}\le \eta \le \tilde K, \]
and that $\eta$ is Lipschitz continuous on $V$. For any $\theta_*\in \Theta$, define the entropy integral
\begin{equation}\label{entr-integral}
    \mathcal J_*(\lambda,R)\coloneqq R+ \int_0^{R} H^{1/2}(\mathcal S_*(\lambda,R),|\cdot|,\rho)d\rho,
\end{equation}
and let $\Psi(\lambda, R)\ge \mathcal J_*(\lambda,R)$ denote any upper bound such that for each $\lambda>0$, $R\mapsto \Psi(\lambda,R)/R^2$ is non-increasing.

\smallskip 

Then, there exist constants $c,c'>0$ only depending on $U,V,M,K,\tilde K$ and $\|\eta\|_{Lip}$ such that for all $\theta_*,\lambda,N,\delta$ satisfying
\begin{equation}\label{delta-req}
    \sqrt N\delta^2 \ge \Psi(\lambda,\delta),
\end{equation}
and for any $t\ge \delta$, any minimizer  $\hat \theta_N$ of $\mathcal J_{N,\lambda}$ from (\ref{JN}) satisfies
\begin{equation}\label{concentration}
    \begin{split}
        P_0^N\left (\tau^2(\hat\theta_N,p_0) \ge 2\big[\tau^2(\theta_*,p_0) +t^2\big] \right ) \le c\exp\left (-\frac{Nt^2}{c} \right ).
    \end{split}
\end{equation}
\end{thm}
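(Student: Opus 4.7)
The plan is to adapt the classical empirical-process framework for penalized maximum likelihood estimation (developed in \cite{WS95,VDG00,VDG01}) to the transport setting, where the Lipschitz property of the pushforward map $S\mapsto S^\#\eta$ serves as the ``link'' that transfers entropy bounds on $\mathcal S$ into entropy bounds on the induced density class. Write $p_\theta\coloneqq S_\theta^\#\eta$ and $\bar p_\theta\coloneqq(p_\theta+p_{\theta_*})/2$. Starting from the basic inequality $\mathcal J_{N,\lambda}(\hat\theta_N)\le\mathcal J_{N,\lambda}(\theta_*)$ and exploiting concavity of $\log$ through the ``half-log'' function
\[ g_\theta\coloneqq\log\bigl((p_\theta+p_{\theta_*})/(2p_{\theta_*})\bigr),\qquad g_\theta\ge\tfrac12\log(p_\theta/p_{\theta_*}), \]
the MLE optimality rewrites as $-\tfrac{2}{N}\sum_{i=1}^N g_{\hat\theta_N}(X_i)\le\lambda^2[\textnormal{pen}(\theta_*)^2-\textnormal{pen}(\hat\theta_N)^2]$. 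The key deterministic ingredients are then the bounds
\[ -\E_0 g_\theta\;\ge\;c\,h^2(p_\theta,p_{\theta_*})-C\,h^2(p_{\theta_*},p_0),\qquad \E_0 g_\theta^2\;\le\;C\,h^2(p_\theta,p_{\theta_*})+C\,h^2(p_{\theta_*},p_0), \]
which follow from elementary $\log$-inequalities together with the uniform upper and lower bounds on $\eta$, $p_0$, and $\det\nabla S_\theta$ furnished by Assumption \ref{generic-assumption}; these bounds also ensure that $\|g_\theta\|_\infty$ is uniformly bounded.

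Combining the basic inequality with the two displays above reduces the problem to controlling the centred empirical process $\sup_\theta|(P_N-P_0)g_\theta|$ over shells $\{\theta:\tau(\theta,p_{\theta_*})\in[R,2R]\}$ for dyadic $R$. I would apply the classical peeling device: for each level $R_j=2^j\delta$, bound on $\Theta_*(\lambda,R_{j+1})\setminus\Theta_*(\lambda,R_j)$ the probability that this empirical process dominates a constant fraction of $R_j^2$. Since the $g_\theta$ are uniformly bounded and $\E_0 g_\theta^2\lesssim R_j^2$ on the shell, a Bernstein/Talagrand-type concentration inequality combined with a Dudley-type chaining bound in $L^2(P_0)$ yields a tail of order $\exp(-cNR_j^2)$ whenever $\sqrt N R_j^2\gtrsim\int_0^{R_j}H^{1/2}(\mathcal G_*(\lambda,R_j),L^2(P_0),\rho)\,d\rho$, where $\mathcal G_*(\lambda,R)\coloneqq\{g_\theta:\theta\in\Theta_*(\lambda,R)\}$. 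The monotonicity hypothesis $R\mapsto\Psi(\lambda,R)/R^2$ non-increasing is precisely what enables the peeled tails to sum into a geometric series; the triangle inequality for $h$ is then used at the end to pass from $\tau(\hat\theta_N,p_{\theta_*})$ to $\tau(\hat\theta_N,p_0)$, producing the factor of $2$ and the additive $\tau^2(\theta_*,p_0)$ in (\ref{concentration}).

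The step that depends specifically on the transport structure is the conversion from $L^2(P_0)$-entropy of $\mathcal G_*(\lambda,R)$ to the stated $|\cdot|$-entropy of $\mathcal S_*(\lambda,R)$ appearing in (\ref{entr-integral}). This reduces to showing that $S\mapsto p_S=(\eta\circ S)\det\nabla S$ is Lipschitz from $(\mathcal S,|\cdot|)$ into $L^\infty$: in case \textbf{(i)} one uses the Lipschitz continuity of $\eta$ together with multilinearity of $\det\nabla S$ in its rows and the uniform $C^1$ bound to get $\|p_{S_1}-p_{S_2}\|_\infty\lesssim\|S_1-S_2\|_{C^1}$; in case \textbf{(ii)} for triangular maps, since $\det\nabla S=\prod_{k=1}^d\partial_kS_k$, only the diagonal derivatives enter and one obtains $\|p_{S_1}-p_{S_2}\|_\infty\lesssim\|S_1-S_2\|_{C^1_{\textnormal{diag}}}$. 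The uniform lower bound on $\det\nabla S$ from (\ref{C1-bd}) or (\ref{general-lb}) then transfers the Lipschitz estimate to $\sqrt{p_S}$ and hence to $g_\theta$, with constants depending only on $U,V,M,K,\tilde K,\|\eta\|_{\textnormal{Lip}}$.

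The main technical obstacle will be making the ``half-log'' deterministic inequalities uniform in $\theta\in\Theta_*(\lambda,R)$ with constants independent of $\theta$, which requires careful handling of the $\lambda^2\textnormal{pen}^2$ terms on both sides of the basic inequality so that they can be absorbed into the squared-Hellinger bound without loss. A secondary issue is ensuring that the Dudley integral can be expressed purely in terms of the parameter-space entropy without extraneous $\lambda$- or $R$-dependence; the definition (\ref{tau-def}) of $\tau$, coupling $h$ and $\textnormal{pen}$ additively, is precisely what makes the peeling clean, since $\tau(\theta,p_{\theta_*})\le R$ simultaneously controls the Hellinger radius of the shell (for the chaining bound) and the penalty difference (for the right-hand side of the basic inequality).
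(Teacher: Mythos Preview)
Your proposal is correct and follows essentially the same route as the paper: the half-log basic inequality, peeling over dyadic $\tau$-shells, empirical-process concentration controlled by an entropy integral, and the Lipschitz estimate $\|S_1^\#\eta-S_2^\#\eta\|_\infty\lesssim|S_1-S_2|$ (your last paragraph is exactly the content of the paper's Lemma~\ref{lem-fwd}) to transfer entropy from the density class to $\mathcal S_*(\lambda,R)$. The only cosmetic differences are that the paper handles the misspecification term $h(p_{\theta_*},p_0)$ via an explicit three-case distinction on the terms $I,II,III$ in the basic inequality (following \cite[pp.~189--191]{VDG00}) rather than your Young-inequality absorption, and that it invokes the Bernstein-size bracketing machinery of \cite[Thm.~5.11]{VDG00} (with $\rho_1$-entropy controlled via Lemma~\ref{lem-bernstein}) rather than Talagrand plus $L^2(P_0)$-chaining; since $\|g_\theta\|_\infty$ is uniformly bounded these are interchangeable here.
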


Let us elaborate more on the two terms in the deviation bound (\ref{concentration}). The first term $\tau^2(\theta_*,p_0)$ can be thought of as a `bias' term whose size is determined by the sum of the Hellinger distance $h^2(S_{\theta_*},p_0)$ and penalty norm $\textnormal{pen}(\theta_*)^2$, of some `best approximating element' $\theta_*$. In contrast, the minimal size of the second term $t^2\ge \delta^2$ is determined by the lower bound (\ref{delta-req}) which encapsulates the metric entropy (and thus statistical complexity) of the classes of transport maps $\mathcal S_*(\lambda, R)$ which one optimizes over. This intuition is reminiscent of previous classical results in M-estimation \cite{VDG00}. We note that the $|\cdot|$-norm covering bounds are required in order to ultimately control the Hellinger-norm bracketing numbers for the pushforward densities, which in turn control the size of the fluctuations of the empirical process in our proofs; see Section \ref{gen-pfs} and especially Lemma \ref{lem-fwd} below for details.

The preceding theorem implies the following convergence rate in Kullback--Leibler divergence (see also Corollary \ref{cor-MLE} above).

\begin{cor}
\label{thm-KL}
Suppose the assumptions of Theorem \ref{thm-penalty-gen} hold, and suppose in addition that $p_0$ is lower bounded by some constant $l>0$. Then there exists some $c'>0$ such that for all $\theta_*, \delta , N, \delta$ as in Theorem \ref{thm-penalty-gen},
\[ P_0^N\left ( \textnormal{KL}(p_0, S_{\hat \theta_N}^\#\eta \big) \ge c'[\tau^2(\theta_*,p_0) +t^2]^2 \right ) \le c'\exp\left (-\frac{Nt^2}{c'}\right ).\]
\end{cor}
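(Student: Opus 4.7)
The plan is to derive this corollary directly from Theorem \ref{thm-penalty-gen}, using the classical fact that for two probability densities $p, q$ uniformly bounded above and below by positive constants, one has $h^2(p, q) \le \textnormal{KL}(p, q) \lesssim h^2(p, q)$, with the implicit upper constant depending only on those boundedness constants. This is precisely the observation made just before the statement. First, I would invoke Theorem \ref{thm-penalty-gen} to conclude that, on an event of probability at least $1 - c\exp(-Nt^2/c)$, the Hellinger-side inequality
\[
  h^2(S_{\hat\theta_N}^\# \eta, p_0) \;\le\; \tau^2(\hat\theta_N, p_0) \;\le\; 2\bigl[\tau^2(\theta_*, p_0) + t^2\bigr]
\]
holds.

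Next I would verify that $q := S_{\hat\theta_N}^\#\eta$ is itself uniformly bounded above and below by positive constants depending only on $\tilde K$ and $M$. By the change-of-variables formula, $q(x) = \eta(S_{\hat\theta_N}(x)) \, \det \nabla S_{\hat\theta_N}(x)$, so combining $\tilde K^{-1} \le \eta \le \tilde K$ with two-sided bounds on $\det \nabla S_{\hat\theta_N}$ suffices. In case \textbf{(i)} of Assumption \ref{generic-assumption}, the lower bound $\det \nabla S \ge M^{-1}$ is asserted in (\ref{C1-bd}), while an upper bound follows from the entry-wise bound $\|\nabla S\|_\infty \le \|S\|_{C^1} \le M$ via Hadamard's inequality. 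In case \textbf{(ii)}, the triangular structure gives $\det \nabla S = \prod_{k=1}^d \partial_k S_k$, and the two sides then follow directly from (\ref{C1-diag-bd}) and (\ref{general-lb}). Together with $l \le p_0 \le K$, this places both $p_0$ and $q$ in a fixed interval $[\underline{c}, \overline{C}] \subset (0, \infty)$ almost surely.

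Finally, applying the classical upper bound $\textnormal{KL}(p, q) \le C \, h^2(p, q)$ for mutually bounded densities (as recorded, e.g., in \cite{WS95} or in standard references on empirical processes) with $C$ depending only on $\underline{c}$ and $\overline{C}$, and combining it with the Hellinger bound from the first step, yields the desired concentration inequality for KL after absorbing all constants into a single $c'$. The only technical point, and the only place where the strengthened hypothesis $p_0 \ge l > 0$ is genuinely used, is the two-sided boundedness established in the previous paragraph: without a positive lower bound on $p_0$, the likelihood ratio $p_0/q$ need not be bounded and KL can behave strictly worse than any constant multiple of the squared Hellinger distance. Beyond this check the argument is routine, so I anticipate no serious obstacle.
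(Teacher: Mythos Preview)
Your proposal is correct and follows essentially the same route as the paper: bound $S_{\hat\theta_N}^\#\eta$ uniformly above and below via Assumption~\ref{generic-assumption}, use this together with the bounds on $p_0$ to convert the Hellinger control from Theorem~\ref{thm-penalty-gen} into a KL bound, and absorb constants. The only cosmetic difference is that the paper makes the inequality $\textnormal{KL}(p_0,q)\lesssim h^2(p_0,q)$ explicit by Taylor expanding the logarithm and passing through $\|p_0-q\|_{L^2}^2$ as an intermediate step, whereas you cite the standard comparison directly; the content is the same.
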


\section{Triangular maps II: parameterizations and penalties}\label{triangle-II}

We now continue the study of estimators based on the triangular Knothe--Rosenblatt maps introduced in Section \ref{MLE-res}. Equipped with the general theory developed in Theorem \ref{thm-penalty-gen}, we may now study more general classes of estimators than the (somewhat restrictive) unpenalized maximum likelihood estimators from Section \ref{MLE-res}. As noted earlier, it is \textit{a priori} rather unclear how to parameterize the class $\KR(\alpha,L,c_{\text{mon}})$ of monotone triangular maps with bounded anisotropic H\"older norm in \eqref{KR}, and thus how to computationally realize the estimator in Theorem~\ref{thm-anisotropic}. Even setting aside questions of how to impose the desired regularity, care is required to parameterize the convex cone of monotone triangular maps from $Q_d$ to itself, due to the monotonicity and range constraints at hand. 
%
%
To surmount these issues, we propose a `rational parameterization' of triangular maps such that minimization of the relevant penalized objective (\ref{JN}) becomes an unconstrained problem. Subsequently, in Sections \ref{sec:sobolev} and \ref{sec:wavelets}, we study two concrete estimators arising as penalized MLEs with a Tikhonov-type Sobolev penalty and a wavelet based penalty, respectively.

\subsection{Rational parameterization of triangular maps on $Q_d$}

Roughly speaking, two key properties characterize a valid component function $S_k$ of the triangular KR map \eqref{KR}: first the \textit{monotonicity constraint} that $S_k$ be monotone increasing in the $k$-th variable, and second the \textit{range constraint} that $S_k$ map $Q_k$ onto $[0,1]$, for each $k=1,\dots,d$.

To this end, we introduce the following `rational' parameterization of triangular transports, where the parameter is a multivariate function $F:Q_d\to \R^d$. For some fixed `link function' $\Phi: \R\to (0,\infty)$ and bounded function $F_k:Q_k\to \R$ for $k=1,\dots,d$, 
let
\begin{equation}\label{S-F}
    S_{F,k}(x_{1:k})\coloneqq \frac{\int_{0}^{x_k}\Phi \big(F_k(x_{1:k-1},y) \big)dy}{\int_{0}^{1}\Phi \big(F_k(x_{1:k-1},y) \big)dy},~~~~x_{1:k}\in Q_k.
\end{equation}
It is straightforward to see that for such $F=(F_1,\dots,F_d)$, $S_F=(S_{F,1},\dots, S_{F,d}):Q_d\to Q_d$ forms a Knothe--Rosenblatt map. 

While the construction (\ref{S-F}) is admissible for any (say, continuously differentiable) link function $\Phi>0$ (see for instance \cite{ZM20}, where $\Phi= (x+1)^2$ is used), it will be convenient for our theory developed below to make the following regularity assumptions on $\Phi$. The upper and lower bounds on $\Phi$ in Definition \ref{reg-link} below will serve to verify the uniform boundedness assumptions (\ref{C1-diag-bd})--(\ref{general-lb}) for the resulting classes $\mathcal S$ of transport maps. 

\begin{defin}[Regular link function]\label{reg-link}
    For constants $0<K_{min}<K_{max}<\infty$, we assume that $\Phi$ is a smooth, bijective and strictly increasing map 
    \begin{equation}\label{phi-req}
        \Phi:\R\to (K_{min},K_{max}),~~~~\Phi'>0.
    \end{equation}
\end{defin}
For example, $\Phi$ could be a `logistic' function of the form $\Phi(x)=K_{min}+(K_{max}-K_{min})/(1+e^{-x})$.


\subsection{Example I: Sobolev-type penalty}
\label{sec:sobolev}

We now utilize (\ref{S-F}) to re-parameterize KR maps by $F$ belonging to some sufficiently regular function space. For the sake of conciseness we focus here on $L^2$-Sobolev spaces, which due to their Hilbert space (i.e., Euclidean) structure are most convenient for the purposes of numerical computation (see the end of this subsection for discussion of possible generalizations).

For $k\ge 1$, let $H^\alpha(Q_k)$ be the space of real-valued square-integrable functions on $Q_k$ with square-integrable weak derivatives of up to order $\alpha$, normed by
\[ \|F\|_{H^\alpha(Q_k)}^2=\sum_{|\beta|\le \alpha} \|\partial_{\beta}F \|_{L^2(Q_k)}^2,\]
and denote its vector valued counterpart by $H^\alpha(Q_k,\R^d),~d\ge 1$.
We moreover define the space of triangular Sobolev functions as
\begin{align*}
    H^\alpha_\Delta(Q_d,\R^d)=\big\{ & F\in H^\alpha(Q_d,\R^d):\\ & F(x)=(F_1(x_1),\dots,F_d(x_{1:d}) )~ \textnormal{for some}~ F_k\in H^\alpha(Q_k),~1\le k\le d\big\},
\end{align*} 
with norm
\[\|F\|_{H^\alpha_\Delta(Q_d,\R^d)}^2=\sum_{k=1}^d\|F_{k}\|_{H^\alpha(Q_k)}^2.\]

For some integer $\alpha>d/2+1$, set the parameter space to be $H^\alpha_\Delta(Q_d,\R^d)$. For some regularization parameter $\lambda_N>0$, the objective to be minimized then reads \ymm{added missing $\eta$. it goes missing in other places too (we should check around on final proofread)}
\begin{equation*}
    \begin{split}
        \mathcal J_{N}:H^\alpha_\Delta(Q_d,\R^d)\to \R,~~~ \mathcal J_{N}(F) \coloneqq -\frac 1N\sum_{i=1}^N\log S_F^\# \eta(X_i) + \lambda_N^2 \|F\|^2_{H^\alpha_\Delta(Q_d,\R^d)}.
    \end{split}
\end{equation*}
Note that since $H^\alpha(Q_k)\subseteq C^{1}(Q_k)$ for all $1\le k\le d$ by the standard Sobolev embedding, the pullback density $S_F^\# \eta(X_i)$ is well-defined. Let $\hat F_N$ denote any (measurable) choice of minimizer
\[ \hat F_N \in {\arg\min}_{F\in H^\alpha_\Delta(Q_d,\R^d)} \mathcal J_N(F).\]

\begin{thm}\label{thm-sobolev}
    Let $\alpha > d/2 + 1$ be some integer and suppose that $p_0\in \mathcal M(\alpha,B,c)$ for some $0<c<B<\infty$. Set $\lambda_N =N^{-\frac{\alpha}{2\alpha+d}}$. Assume moreover that $\eta\in \tilde {\mathcal M}(\alpha,B,c)$ from (\ref{M-fact}). Then there are constants $0<K_{min}<K_{max}< \infty$ such that for any link function $\Phi$ satisfying (\ref{phi-req}), there exists $c'>0$ (which may be chosen uniformly over $p_0,\eta$ as above)\ssw{Add this to other thms too?} so that for any $M\ge c'$ and $N\ge 1$,
    \[ P_{0}^N\left ( h^2(S_{\hat F_N}^\#\eta, p_0) + \lambda_N^2 \|F\|^2_{H^\alpha_\Delta(Q_d,\R^d)} \ge M^2N^ {-\frac{2\alpha}{2\alpha+d}}\right ) \le c'\exp\left (- \frac{M^2N^{d/(2\alpha+d)}}{c'}\right ).\]
\end{thm}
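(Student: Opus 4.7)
The plan is to realize the estimator as an instance of Theorem \ref{thm-penalty-gen} in the triangular case \textbf{(ii)}, taking $\Theta = H^\alpha_\Delta(Q_d,\R^d)$, $S_\theta = S_F$ via (\ref{S-F}), penalty $\textnormal{pen}(F)=\|F\|_{H^\alpha_\Delta(Q_d,\R^d)}$ and $\lambda=\lambda_N=N^{-\alpha/(2\alpha+d)}$. Applying the deviation bound (\ref{concentration}) with a well-chosen $F_*\in\Theta$ and threshold $t\asymp\delta=CN^{-\alpha/(2\alpha+d)}$ will yield the conclusion, once we verify (a) the uniform bounds in Assumption \ref{generic-assumption}\textbf{(ii)}, (b) a bias bound $\tau^2(F_*,p_0)=O(\lambda_N^2)$, and (c) a sharp entropy estimate for $\mathcal S_*(\lambda_N,R)$ in $\|\cdot\|_{C^1_{\text{diag}}}$.

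Ingredients (a) and (b) are direct. For (a): by construction each $S_{F,k}$ takes values in $[0,1]$ and $\partial_k S_{F,k}(x)=\Phi(F_k(x))/\int_0^1\Phi(F_k(x_{1:k-1},y))\,dy\in[K_{\min}/K_{\max},K_{\max}/K_{\min}]$ by (\ref{phi-req}), so (\ref{C1-diag-bd})--(\ref{general-lb}) hold with $M=M(d,K_{\min},K_{\max})$. For (b): let $S_*\coloneqq S_{p_0,\eta}$; by Proposition \ref{prop-fact}, $S_*\in\KR(\alpha,L,c_{\text{mon}})$ for constants depending only on $\alpha,B,c,d$. Choosing $K_{\min}<c_{\text{mon}}\le L<K_{\max}$ (this is what pins down the link constants in the theorem statement), define $F_{*,k}\coloneqq\Phi^{-1}(\partial_k(S_*)_k)$. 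Because $(S_*)_k(x_{1:k-1},\cdot)$ is a cdf on $[0,1]$, one has $\int_0^1\partial_k(S_*)_k(x_{1:k-1},y)\,dy\equiv 1$, so the denominator in (\ref{S-F}) is identically one and $S_{F_*}\equiv S_*$ \emph{exactly}, giving $h(S_{F_*}^\#\eta,p_0)=0$. Smoothness of $\Phi^{-1}$ on $(K_{\min},K_{\max})$ together with the embedding $C^\alpha\hookrightarrow H^\alpha$ on $Q_k$ yield $\|F_*\|_{H^\alpha_\Delta}\le C_*$ uniformly in $p_0\in\mathcal M(\alpha,B,c)$, hence $\tau^2(F_*,p_0)\le C_*^2\lambda_N^2$.

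For (c), any $F\in\Theta_*(\lambda_N,R)$ satisfies $\|F\|_{H^\alpha_\Delta}\le R/\lambda_N$, and a direct computation from (\ref{S-F}) using $\|\Phi'\|_\infty<\infty$ and the lower bound $\int\Phi\ge K_{\min}$ gives $\|S_F-S_{F'}\|_{C^1_{\text{diag}}}\le C_\Phi\|F-F'\|_\infty$, reducing the problem to the $L^\infty$-entropy of an $H^\alpha$-Sobolev ball. The target estimate is
\begin{equation*}
H(\mathcal S_*(\lambda_N,R),\|\cdot\|_{C^1_{\text{diag}}},\rho)\;\lesssim\;(R/(\lambda_N\rho))^{d/\alpha},
\end{equation*}
which on substitution into (\ref{entr-integral}) and using $\alpha>d/2$ for integrability yields the valid choice $\Psi(\lambda_N,R)\asymp R\lambda_N^{-d/(2\alpha)}$ (monotone in $R$ in the required sense). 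The requirement (\ref{delta-req}) then becomes $\sqrt N\delta\gtrsim\lambda_N^{-d/(2\alpha)}$, and $\lambda_N=N^{-\alpha/(2\alpha+d)}$ is precisely the tuning that balances this at $\delta\asymp N^{-\alpha/(2\alpha+d)}$. Invoking (\ref{concentration}) with $t=M\delta$ for sufficiently large $M$, and absorbing the $O(\lambda_N^2)$ bias into the $M^2\lambda_N^2$ right-hand side, gives the claimed concentration inequality.

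The principal obstacle is justifying the entropy exponent $d/\alpha$ in the Sobolev setting: the naive route via $H^\alpha\hookrightarrow L^\infty$ only delivers the weaker exponent $\rho^{-2d/(2\alpha-d)}$ (Birman--Solomyak), which would give a strictly suboptimal rate. Obtaining the sharp $\rho^{-d/\alpha}$ bound must exploit the fact that $\partial_k S_{F,k}$ inherits the full $H^\alpha$-regularity of $F_k$ through the smooth link $\Phi$, so that the densities $S_F^\#\eta$ themselves form a Sobolev class of positive densities; for such a class, the Hellinger bracketing entropy---which is what is ultimately consumed by the empirical-process arguments underlying Theorem \ref{thm-penalty-gen}---is indeed of order $\rho^{-d/\alpha}$. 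The condition $\alpha>d/2+1$ is used here precisely to ensure that this smoothing transfer is legitimate (in particular, that $S_F$ is unambiguously $C^1$).
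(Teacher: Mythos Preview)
Your outline (a)--(c) tracks the paper's proof almost verbatim: the paper also applies Theorem~\ref{thm-penalty-gen} in case~\textbf{(ii)} with $\Theta=H^\alpha_\Delta$, takes $\theta_*=F^\natural$ with $F^\natural_k=\Phi^{-1}(\partial_k S^{p_0}_k)$ (exactly your $F_*$), invokes the increment bound $\|S_F-S_{\tilde F}\|_{C^1_{\text{diag}}}\lesssim\|F-\tilde F\|_\infty$ (Lemma~\ref{C1diag-lip}), and reduces to $L^\infty$-entropy of an $H^\alpha$-ball of radius $R/\lambda_N$. Your computation $\Psi(\lambda_N,R)\asymp R\lambda_N^{-d/(2\alpha)}$ and the resulting balance $\delta\asymp N^{-\alpha/(2\alpha+d)}$ are correct.

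Your final paragraph, however, manufactures an obstacle that is not there. The $L^\infty$-metric entropy of the unit ball of $H^\alpha(Q_k)$ is $\lesssim\rho^{-k/\alpha}$ whenever $\alpha>k/2$; this is a classical result (Birman--Solomyak, Edmunds--Triebel) and is precisely what the paper invokes, citing Corollary~2 of Nickl--P\"otscher~\cite{NP07}. The exponent $2d/(2\alpha-d)$ you worry about arises only if one wastefully passes through the Sobolev embedding $H^\alpha\hookrightarrow C^{\alpha-d/2}$ and then uses H\"older-ball entropy; there is no need to do this. Consequently the detour you sketch---pushing forward to a Sobolev class of \emph{densities} and controlling Hellinger bracketing directly---is unnecessary, and would in any case fall outside a black-box application of Theorem~\ref{thm-penalty-gen}, whose hypothesis~(\ref{entr-integral}) is phrased in terms of $|\cdot|$-entropy of the \emph{maps} $\mathcal S_*(\lambda,R)$. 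Simply cite the sharp Sobolev-ball entropy and the proof closes exactly as you wrote in (c).
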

The proof of Theorem \ref{thm-sobolev}, which can be found in Section \ref{sec-triII-proofs}, is based on an application of our general theory from Section \ref{sec-penalty}, with $\Theta= H_\Delta^\alpha(Q_k,\R^d)$. The $L^2$-type Sobolev penalty chosen here resembles typical Tikhonov regularisation schemes which are commonly used in inverse problems \cite{EHN96,BB18}; statistical convergence rates for such estimators have for instance been studied in \cite{NVW18}, using techniques similar to the ones employed here.

We also note that the choice of $L^2$-type penalty norm here was made for convenience. In general, other $\alpha$-degree (H\"older or Besov) smoothness penalties would lead to the same convergence results; see, e.g., \cite{AW21}, where penalized least squares estimators with Besov penalty are studied in an inverse problems context. However, the choice of $L^2$-penalty is made most commonly in practical situations due to strict convexity and the computationally convenient Hilbert space structure; we thus decided to focus our efforts on this example.

\subsection{Example II: Wavelet-based penalty}
\label{sec:wavelets}

In practice, one employs high-dimensional discretization schemes to approximate infinite-dimensional regularity classes, and wavelet truncations are one of the most commonly used methods to do so. For any $1\le k\le d$, let \[\Psi^k=\{\psi_{lm}^k:l\ge 0, m\in \Z^k\} \]
denote the standard collection of `$S$-regular,' compactly supported Daubechies tensor wavelets forming an orthonormal basis of $L^2(\R^k)$; see, e.g., \cite{D92, M92} for definitions ($S>\alpha$ will be chosen later to be sufficiently large). Here, $l\ge 0$ denotes the resolution level, and $m$ is the index of wavelets at each resolution level. For convenience (and in slight abuse of notation) we also re-enumerate the wavelets in $\Psi^k$ \ymm{superscript for consistency} at at each level $l$ by a single-integer index $m\ge 1$. 
Since we wish to model functions on the cube $Q_k$, we consider only those wavelets whose support has a non-empty intersection with $Q_k$,
\[ \Psi^k_{Q_k} \coloneqq \big\{\psi\in \Psi^k: \text{supp}(\psi)\cap Q_k \neq \emptyset\big\}. \]
Let $L_l^k\ge 1$ denote the number of wavelets in $\Psi^k_{Q_k}$ at level $l$, satisfying $L_l^k\simeq 2^{lk}$. Without loss of generality, let us denote the indices of those wavelets by $m\in \{1,\dots,L_l^k\}$. For $J\ge 1$, we denote the index set up to resolution level $J$ by
\[ I_{k,J}\coloneqq \big\{(l,m): 0\le l\le J,~1\le m\le L^k_l \big\}. \]
We will use the following parameterization of triangular transport maps using wavelets.

\begin{defin}[Wavelet-based parameterization of KR maps]
    Let $0<K_{min}<K_{max}<\infty$, and suppose $\Phi:\R\to (K_{min},K_{max})$ is a regular link function in the sense of Definition \ref{reg-link}. Let $J\ge 1$ be some resolution level. Define the parameter space $\Theta$ as 
    \begin{align}\label{theta-def}
        \Theta \coloneqq \bigotimes_{k=1}^d \R^{I_{k,J}}=\big\{\theta=(\theta_1,\dots,\theta_d) \ \textnormal{s.t.}\ \theta^k=\big(\theta_{lm}^k: (l,m)\in I_{k,J}\big) \in \R^{I_{k,J}},~1\le k\le d \big\}.
    \end{align}
    Given any $\theta\in \Theta$, we then define the $k$-th component $S_{\theta,k}$ of the KR map $S_\theta :Q_d\to Q_d$ by 
    \begin{equation}\label{S-theta}
        S_{\theta,k}(x_{1:k})\coloneqq \frac{\int_{0}^{x_k}\Phi \big(F_{\theta,k}(x_{1:k-1},y) \big)dy}{\int_{0}^{1}\Phi \big(F_{\theta,k}(x_{1:k-1},y) \big)dy},~~~~x_{1:k}\in Q_k,
    \end{equation}
    where $F_{\theta,k}\in L^2(Q_k)$ is given by the wavelet expansion
    \begin{equation}\label{F-theta}
        F_{\theta,k}(x_{1:k})\coloneqq\sum_{(l,m)\in I_{k,J}}\theta_{lm}^k\psi_{lm}^k(x_{1:k}).
    \end{equation}
\end{defin}
For $\alpha>d/2+1$, we now equip the space $\Theta$ with the wavelet-type sequence norm
\[ \|\theta\|_{b^\alpha_{22}}^2\coloneqq\sum_{k=1}^d\sum_{l=0}^J\sum_{m=1}^{L_l^k} 2^{2l\alpha }|\theta^k_{lm}|^2,~~~ \theta\in \Theta. \]
Again, other choices would have been feasible here; however we focus on the case of (computationally convenient) $\ell^2$-type penalties, yielding the objective function
\begin{equation}\label{wave-MLE}
    \mathcal J_N:\Theta = \bigotimes_{k=1}^d\R^{I_{k,J}} \to \R,~~~\mathcal J_N(\theta) \coloneqq-\sum_{i=1}^N \log S_\theta^\# \eta (X_i)+ \lambda_N^2 \|\theta\|_{b^\alpha_{22}}^2,
\end{equation}
where $\lambda_N>0,~J\ge 1$ are tuning parameters to be chosen below. Let $\hat \theta_N$ denote any (measurable with respect to $X_1,\dots, X_N$) choice $\hat \theta_N \in \arg\min_{\theta\in\Theta} \mathcal J_N$.


\begin{thm}\label{thm-wave}
    Let $\alpha > d/2 + 1$ be some integer and assume that $p_0\in \mathcal M(\alpha,B,c)$ and $\eta\in \tilde {\mathcal M}(\alpha,B,c)$, for some $0<c<B<\infty$. Set $\lambda_N \coloneqq N^{-\frac{\alpha}{2\alpha+d}}$, and set the resolution level $J=J_N$ such that $2^{Jd}\simeq N\delta_N^2=N^{\frac{d}{2\alpha +d}}$. Then there are constants $0<K_{min}<K_{max}< \infty$ such that for any regular link function $\Phi$ (in the sense of Definition \ref{reg-link}), 
    there exists $c'>0$ so that for any $M\ge c'$ and $N\ge 1$,
    \[ P_0^N\left ( h^2\big(S_{\hat \theta_N}^\#\eta,p_0\big) + \lambda_N^2 \|\hat \theta_N\|_{b^\alpha_{22}}^2 \ge M^2N^{-\frac{2\alpha}{2\alpha+d}} \right )\le c'\exp\left ( -\frac {M^2N^{d/(2\alpha+d)}}{c'}  \right). \]
\end{thm}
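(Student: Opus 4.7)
The plan is to derive Theorem \ref{thm-wave} as a corollary of the general penalty result Theorem \ref{thm-penalty-gen}, applied in the triangular case \textbf{(ii)} with $\Theta$ from (\ref{theta-def}), $S_\theta$ from (\ref{S-theta})--(\ref{F-theta}), $\textnormal{pen}(\theta) = \|\theta\|_{b^\alpha_{22}}$, and $\lambda = \lambda_N = N^{-\alpha/(2\alpha+d)}$. I need four inputs: (a) Assumption \ref{generic-assumption}\textbf{(ii)}; (b) an approximator $\theta_*$ with $\tau^2(\theta_*, p_0) \lesssim \delta_N^2$, where $\delta_N := \lambda_N$; (c) the entropy integral $\mathcal J_*(\lambda_N, R)$; (d) the deviation at $t \simeq \delta_N$. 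Item (a) is immediate from the rational form: $S_{\theta,k}(Q_k) \subseteq [0,1]$ by construction, and $\partial_k S_{\theta,k} = \Phi(F_{\theta,k})/\int_0^1 \Phi(F_{\theta,k}(x_{1:k-1},\cdot))\,dy$ is pinched between $K_{min}/K_{max}$ and $K_{max}/K_{min}$. I fix $K_{min} < c_{\text{mon}} < L < K_{max}$, where $L, c_{\text{mon}}$ are the constants of Proposition \ref{prop-fact} associated to $(\alpha, B, c)$; this ensures that the true KR map below can be represented in the parameterization.

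For (b), I take $S^* := S_{p_0, \eta}$. Proposition \ref{prop-fact} places $S^*$ in $\KR(\alpha, L, c_{\text{mon}})$. The KR boundary conditions $S^*_k(\cdot, 0) \equiv 0$ and $S^*_k(\cdot, 1) \equiv 1$ imply $S^* = S_{F^*}$ with $F^*_k := \Phi^{-1}(\partial_k S^*_k) \in C^\alpha(Q_k)$, uniformly bounded in $C^\alpha$. I then define $\theta^k_{*, lm} := \langle F^*_k, \psi^k_{lm}\rangle$ for $(l,m) \in I_{k, J_N}$. Standard $L^\infty$-wavelet approximation with $S$-regular Daubechies wavelets ($S > \alpha$) yields $\|F^* - F_{\theta_*}\|_\infty \lesssim 2^{-J_N\alpha} \simeq \delta_N$, and the embedding $C^\alpha \hookrightarrow B^\alpha_{22}$ gives $\|\theta_*\|_{b^\alpha_{22}} \lesssim 1$, so $\lambda_N^2 \|\theta_*\|_{b^\alpha_{22}}^2 \lesssim \delta_N^2$. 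The key structural observation for the Hellinger bias is that in the rational parameterization both $S_F$ and the diagonal Jacobian entries $\partial_k S_{F,k}$ depend on $F$ only pointwise (no derivatives), hence $\|S_F^\# \eta - S_{\tilde F}^\# \eta\|_\infty \lesssim \|F - \tilde F\|_\infty$; combined with uniform lower bounds on the pushforward densities, this yields $h^2(S_{\theta_*}^\#\eta, p_0) \lesssim \|F_{\theta_*} - F^*\|_\infty^2 \lesssim \delta_N^2$, so $\tau^2(\theta_*, p_0) \lesssim \delta_N^2$.

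For (c) and (d), I exploit the anisotropic Besov structure: since $\|\theta\|_{b^\alpha_{22}} \le R/\lambda_N$ for $\theta \in \Theta_*(\lambda_N, R)$, each $F_{\theta,k}$ lies in a $B^\alpha_{22}(Q_k)$-ball of radius $R/\lambda_N$. The classical entropy estimate (valid since $\alpha > k/2$ for all $k \le d$) gives entropy $\lesssim (R/(\lambda_N \epsilon))^{k/\alpha}$ of such a ball in $L^\infty(Q_k)$. Composing with the Lipschitz maps $\theta^k \mapsto F_{\theta,k}$ (via the Besov embedding) and $F \mapsto S_F$ (the pointwise Lipschitz bound from (b)), and summing over $k$ (with $k = d$ dominant), I obtain $H(\mathcal S_*(\lambda_N, R), \|\cdot\|_{C^1_{\text{diag}}}, \rho) \lesssim (R/(\lambda_N \rho))^{d/\alpha}$. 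Integrating yields $\mathcal J_*(\lambda_N, R) \lesssim R \lambda_N^{-d/(2\alpha)}$, which I majorize by $\Psi(\lambda_N, R) = c R \lambda_N^{-d/(2\alpha)}$ (with the required decreasing $\Psi/R^2$). Condition (\ref{delta-req}) then becomes $\sqrt N \delta \gtrsim \lambda_N^{-d/(2\alpha)}$, equivalent to $\delta \gtrsim \delta_N$ by the choice of $\lambda_N$. Applying Theorem \ref{thm-penalty-gen} with $t = M\delta_N$ for $M$ large and using $Nt^2 = M^2 N^{d/(2\alpha+d)}$ in the exponent yields the stated bound.

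The hardest step will be the Hellinger bias estimate in (b), where I need the sharp $L^\infty$-wavelet approximation rate $2^{-J_N\alpha}$ rather than the slower $C^1$-rate $2^{-J_N(\alpha-1)}$ that a naive analysis of a pullback density might suggest. The rational parameterization is tailored precisely to this: because $\det \nabla S_F = \prod_k \Phi(F_k(x_{1:k}))/\int_0^1 \Phi(F_k(x_{1:k-1},y))\,dy$ is a pointwise nonlinear functional of $F$ alone, the map $F \mapsto S_F^\# \eta$ is already Lipschitz in $L^\infty$. Secondary technical issues include choosing (boundary-corrected) Daubechies wavelets on $Q_k$ that simultaneously support $L^\infty$-approximation of $C^\alpha$ functions and the $B^\alpha_{22}$-embedding, and verifying that the anisotropic Besov entropy bound used in (c) applies to the finite-dimensional sieve with constants uniform in the truncation level $J_N$.
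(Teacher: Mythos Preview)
Your proposal is correct and follows essentially the same route as the paper: apply Theorem \ref{thm-penalty-gen} in the triangular case with $\theta_*$ given by the level-$J_N$ wavelet projection of $F^\natural_k=\Phi^{-1}(\partial_k S^{p_0}_k)$, bound the bias via the $L^\infty$-Lipschitz property of $F\mapsto S_F^\#\eta$ (the paper phrases this as Lemma \ref{lem-fwd} combined with Lemma \ref{C1diag-lip}) together with the $L^\infty$ wavelet approximation rate $2^{-J_N\alpha}$, and handle the entropy exactly as in the Sobolev case via the $B^\alpha_{22}$-ball of radius $R/\lambda_N$. The only cosmetic differences are that the paper uses an explicit extension operator $\mathcal E:B^\alpha_{\infty\infty}(Q_k)\to B^\alpha_{\infty\infty}(\R^k)$ rather than boundary-corrected wavelets, and that you have the bound $\|\theta\|_{b^\alpha_{22}}\le R/\lambda_N$ written with the correct orientation (the paper's display has it inverted, evidently a typo).
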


\subsection{Estimation of the transport map}

So far, we have primarily been concerned with the estimation of the data-generating density $p_0$ as the pushforward of some learned transport map. However, one may also be interested in the recovery of the (unique up to coordinate orderings) KR rearrangement $S_{p_0,\eta}$. We achieve this by employing the following \textit{stability estimate}, bounding the modulus of continuity for the nonlinear map $p_0\mapsto S_{p_0,\eta}$ (given a fixed reference $\eta$).
\begin{lem}\label{lem-stability}
    For some $c>0$, let $\eta:Q_d\to [c,\infty)$ be a fixed $C^1$ reference density which factorizes into $C^1$ one-dimensional marginals $\eta(x)=\Pi_{k=1}^d e_k(x_k)$. Then, for any constants $0< m< M<\infty$ there exists $C>0$ such that for any Lipschitz densities $p,\bar p \ge m$ with Lipschitz norm $\max\{\|p\|_{Lip},\|\bar p\|_{Lip}\}\le M$, we have that
    \[ \|S_{p,\eta}-S_{\bar p, \eta}\|_{H^1_{\text{diag}}}\le C \|p-\bar p\|_{L^2}.\]
\end{lem}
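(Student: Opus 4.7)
The plan is to exploit the explicit construction of the KR rearrangement from Section~\ref{MLE-res} together with the factorization of $\eta$. Because $\eta(x)=\prod_{k=1}^d e_k(x_k)$, the conditional CDF of $\eta$ in the $k$-th variable is simply $E_k(y)\coloneqq\int_0^y e_k(z)dz$, independent of $x_{1:k-1}$. Writing $G^p_k(x_{1:k})\coloneqq\int_0^{x_k}p_k(x_{1:k-1},y)dy$ for the $k$-th conditional CDF of $p$ (with $p_k=\tilde p_k/\tilde p_{k-1}$), the recursive formula (\ref{Tk-def}) collapses to the clean expression
\[ S_{p,\eta,k}(x_{1:k})=E_k^{-1}\bigl(G^p_k(x_{1:k})\bigr), \]
and analogously for $\bar p$. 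This decoupling of coordinates is what makes a direct $L^2$-stability argument tractable; without the factorization one would need to propagate perturbations of $S_{1:k-1}$ through $F^\eta_{k,S_{1:k-1}(\cdot)}$ recursively.

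The rest is a cascade of $L^2$ estimates. First, Cauchy--Schwarz applied to integrating out the last $d-k$ coordinates (which have total measure~$1$) gives $\|\tilde p_k-\tilde{\bar p}_k\|_{L^2(Q_k)}\le \|p-\bar p\|_{L^2(Q_d)}$. The lower bound $p,\bar p\ge m$ yields $\tilde p_{k-1},\tilde{\bar p}_{k-1}\ge m$, while the Lipschitz assumption ensures uniform upper bounds on $\tilde p_k$ etc.; combining these with the quotient decomposition
\[ p_k-\bar p_k=\frac{\tilde p_k(\tilde{\bar p}_{k-1}-\tilde p_{k-1})+\tilde p_{k-1}(\tilde p_k-\tilde{\bar p}_k)}{\tilde p_{k-1}\tilde{\bar p}_{k-1}} \]
produces $\|p_k-\bar p_k\|_{L^2(Q_k)}\lesssim \|p-\bar p\|_{L^2(Q_d)}$, and one further Cauchy--Schwarz in the $y$-integral transfers the same bound to $\|G^p_k-G^{\bar p}_k\|_{L^2(Q_k)}$. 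Finally, $E_k^{-1}$ is Lipschitz on $[0,1]$ because $e_k\ge c>0$, whence $\|S_{p,\eta,k}-S_{\bar p,\eta,k}\|_{L^2(Q_k)}\lesssim \|p-\bar p\|_{L^2(Q_d)}$.

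For the diagonal derivatives, the chain rule gives $\partial_k S_{p,\eta,k}=p_k\cdot\psi\circ G^p_k$, where $\psi\coloneqq 1/(e_k\circ E_k^{-1})$ is Lipschitz on $[0,1]$ thanks to $e_k\in C^1$ and $e_k\ge c$. Telescoping,
\[ \partial_k S_{p,\eta,k}-\partial_k S_{\bar p,\eta,k}=(p_k-\bar p_k)\,\psi(G^{\bar p}_k)+p_k\,\bigl[\psi(G^p_k)-\psi(G^{\bar p}_k)\bigr], \]
and bounding $\|\psi\|_\infty$, $\|\psi\|_{Lip}$, and $\|p_k\|_\infty$ by constants depending only on $c,m,M,d$, the previous $L^2$ bounds yield $\|\partial_k S_{p,\eta,k}-\partial_k S_{\bar p,\eta,k}\|_{L^2(Q_k)}\lesssim\|p-\bar p\|_{L^2(Q_d)}$. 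Summing over $k=1,\dots,d$ completes the proof. The main difficulty is not a single conceptual obstacle but the careful bookkeeping: every quotient estimate depends on uniform two-sided bounds on the marginals $\tilde p_k,\tilde{\bar p}_k$, and the diagonal-derivative step crucially uses the $C^1$-regularity (not just $L^\infty$-boundedness) of the reference marginals $e_k$ to ensure $\psi$ is Lipschitz.
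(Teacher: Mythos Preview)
Your proof is correct and follows essentially the same approach as the paper's: both exploit the factorization of $\eta$ to write $S_{p,\eta,k}=(F^\eta_k)^{-1}\circ F^p_{k,x_{1:k-1}}$ (your $E_k^{-1}\circ G^p_k$), bound the $L^2$-difference via the Lipschitz property of $(F^\eta_k)^{-1}$, and handle the diagonal derivative by the chain rule and a telescope, using that $(F^\eta_k)^{-1}\in C^2$ (your $\psi$ Lipschitz). The only cosmetic difference is that the paper first derives pointwise bounds involving $L^1$-norms of slices $\|p(x_{1:k-1},\cdot)-\bar p(x_{1:k-1},\cdot)\|_{L^1(Q_{d-k+1})}$ and then integrates via Minkowski/Jensen, whereas you work in $L^2$ throughout via Cauchy--Schwarz on the marginals $\tilde p_k$ and the quotient decomposition for $p_k$; both routes arrive at the same estimate.
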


Combining the preceding stability estimate with our previous theorems, one immediately obtains a convergence rate for the reconstruction error of $\hat S_N$. In contrast to recent work in the context of optimal transport \cite{HR20,MBNW21}, the loss function obtained here is the natural anisotropic $\|\cdot\|_{H^1_{\text{diag}}}$-norm corresponding to the Hellinger distance between $p_0$ and $\hat S_N^\# \eta$ (as opposed to $\|\cdot\|_{L^2}$); see also the discussion at the end of Section \ref{sec:intro}.

\begin{cor}\label{cor-map}
Let $\hat S_N$ denote either (i) the estimator $S_{\hat F_N}$ from Theorem \ref{thm-sobolev}, (ii) the estimator $S_{\hat \theta_N}$ from Theorem \ref{thm-wave}, or (iii) $\hat S$ from Theorem \ref{thm-anisotropic}; and assume the hypotheses of the respective theorems.
Then, we have that
\[ E_{P_0^N}\big\|\hat S_N - S_{p_0,\eta}\big\|_{H^1_{\text{diag}}}^2 \lesssim N^{-\frac{2\alpha}{2\alpha+d}}. \]
\end{cor}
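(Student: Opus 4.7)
The plan is to identify $\hat S_N$ with the Knothe--Rosenblatt rearrangement from $\hat p_N \coloneqq \hat S_N^\# \eta$ to $\eta$, and then invoke the stability estimate of Lemma \ref{lem-stability}. Since each of the three estimators produces a monotone triangular diffeomorphism on $Q_d$ with $(\hat S_N)_\# \hat p_N = \eta$, uniqueness of the KR rearrangement among monotone triangular diffeomorphisms forces $\hat S_N = S_{\hat p_N, \eta}$. Applying Lemma \ref{lem-stability} to the pair $(p,\bar p) = (\hat p_N, p_0)$ then yields
\[ \|\hat S_N - S_{p_0, \eta}\|_{H^1_{\text{diag}}} \le C \|\hat p_N - p_0\|_{L^2}, \]
provided that $\hat p_N$ and $p_0$ are uniformly Lipschitz and uniformly bounded below on the event under consideration.

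\textbf{Verifying the regularity of $\hat p_N$.} The target density $p_0 \in \mathcal M(\alpha,B,c)$ directly satisfies the hypotheses of Lemma \ref{lem-stability} because $\alpha \ge 1$ gives Lipschitz continuity and $p_0 \ge c$. For $\hat p_N$, the verification splits by case. In case (iii), $\hat S \in \KR(\alpha, L, c_{\text{mon}})$ by definition, so $\|\hat S\|_{C^1}$ is deterministically bounded and $\det \nabla \hat S = \Pi_{k=1}^d \partial_k \hat S_k \ge c_{\text{mon}}^d$; hence $\hat p_N = (\eta \circ \hat S) \det \nabla \hat S$ is uniformly Lipschitz and bounded below. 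In cases (i)--(ii), the concentration bounds of Theorems \ref{thm-sobolev} and \ref{thm-wave} imply, on the high-probability event, that $\|\hat F_N\|_{H^\alpha_\Delta}^2 \lesssim \lambda_N^{-2} N^{-2\alpha/(2\alpha+d)} = O(1)$, and analogously $\|\hat \theta_N\|_{b^\alpha_{22}} = O(1)$. The Sobolev embedding $H^\alpha(Q_k) \hookrightarrow C^1(Q_k)$, valid because $\alpha > d/2 + 1$, then controls the $C^1$-norms of $\hat F_N$ and $F_{\hat \theta_N}$; combined with the uniform bounds $\Phi \in [K_{\min}, K_{\max}]$ and $\|\Phi'\|_\infty<\infty$ from Definition \ref{reg-link}, this yields uniform $C^1$-bounds on $S_{\hat F_N}$ together with $\partial_k S_{\hat F_N,k} \ge K_{\min}/K_{\max}$, and hence $\det \nabla S_{\hat F_N} \ge (K_{\min}/K_{\max})^d$. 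So $\hat p_N$ is uniformly Lipschitz and bounded below on the concentration event.

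\textbf{From Hellinger concentration to expected loss.} On this event, the uniform upper and lower bounds on $\hat p_N$ and $p_0$ yield the standard comparison $\|\hat p_N - p_0\|_{L^2}^2 \lesssim h^2(\hat p_N, p_0)$ via the factorization $\hat p_N - p_0 = (\sqrt{\hat p_N} + \sqrt{p_0})(\sqrt{\hat p_N} - \sqrt{p_0})$; combined with the stability estimate and the Hellinger rates from Theorems \ref{thm-anisotropic}, \ref{thm-sobolev}, \ref{thm-wave}, this gives $\|\hat S_N - S_{p_0,\eta}\|_{H^1_{\text{diag}}}^2 \lesssim N^{-2\alpha/(2\alpha+d)}$ on the concentration event. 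To pass to expectation I would use $E[X] = \int_0^\infty P(X>t)\, dt$: the deterministic bound on $\|\hat S_N\|_{C^1_{\text{diag}}} + \|S_{p_0,\eta}\|_{C^1_{\text{diag}}}$ arising from the parameter class caps the integrand on the exceptional event, whose probability decays exponentially in $N^{d/(2\alpha+d)}$, so that contribution is negligible compared to the main term.

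\textbf{Main obstacle.} The hard part will be the uniform $C^1$-control of the \emph{penalized} estimators, since their parameters a priori live in spaces without pointwise regularity; this is precisely where the hypothesis $\alpha > d/2 + 1$ enters (through the Sobolev embedding) and where the tuning $\lambda_N = N^{-\alpha/(2\alpha+d)}$ matched to the smoothness index $\alpha$ is essential, as it simultaneously forces the penalty term $\|\hat F_N\|_{H^\alpha_\Delta}$ to remain $O(1)$ and delivers the minimax Hellinger rate.
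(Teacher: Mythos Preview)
Your approach is correct and matches the route the paper indicates: the corollary is stated immediately after Lemma~\ref{lem-stability} with the comment that ``combining the preceding stability estimate with our previous theorems, one immediately obtains a convergence rate,'' and no further proof is given. Your proposal fills in exactly the details that sentence elides---identifying $\hat S_N$ with $S_{\hat p_N,\eta}$ via KR uniqueness, verifying the Lipschitz and lower-bound hypotheses of Lemma~\ref{lem-stability} for $\hat p_N$ (deterministically in case~(iii) via Proposition~\ref{prop-sharp}, and on the concentration event in cases~(i)--(ii) via the penalty bound and Sobolev embedding), converting Hellinger to $L^2$, and handling the exceptional set using the deterministic $C^1_{\text{diag}}$-bound from Lemma~\ref{C1diag-lip}.
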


\section{Proofs for Section \ref{sec-penalty}}\label{gen-pfs} The proof of Theorem \ref{thm-penalty-gen} adapts techniques developed in \cite{VDG00,VDG01} (see also Chapter 7 in \cite{GN16}) to the current transport-based setting.

\begin{proof}[Proof of Theorem \ref{thm-penalty-gen}] 

1. \textit{`Basic inequality.'} For ease of notation, let us write $\hat \theta=\hat\theta_N$, $p^*\coloneqq S_{\theta_*}^\#\eta$, $\hat p \coloneqq S_{\hat \theta}^\#\eta$ as well as $I(\theta)\coloneqq\textnormal{pen}(\theta)$. We begin by deriving an appropriate `basic inequality.' Observe that

\[ \int \log \frac{\hat p}{p^*}dP_N - \lambda_N^2I^2(\hat \theta) + \lambda_N^2I^2(\theta_*) \ge 0. \]
By the concavity of $\log$ and using that $\frac 12 \log \frac{\hat p}{p^*}= \frac 12 (\log \hat p+ \log p^*)- \log p^*$ we obtain
\begin{equation}\label{basic-start}
    2\int \log \frac{\hat p+p^*}{2p^*}dP_N - \lambda_N^2I^2(\hat \theta) \ge -\lambda_N^2I^2(\theta_*).
\end{equation}
Now, using the assumptions (\ref{C1-bd})--(\ref{general-lb}), we see that for any choice of $\theta_*$,
\begin{equation}\label{p*-lb}
    \inf_{x\in U} p^*(x) = \inf_{x\in U} \eta(S(x))\det \nabla S(x) \ge \tilde K^{-1}L^{-1}.
\end{equation}
Using this as well as $p_0\le K$, it follows that 
\begin{equation}\label{ratio-bound}
    \sup_{x\in U}\frac{p_0}{p^*}\le K \tilde K L.
\end{equation}
Therefore, writing $C\coloneqq2\big(1+\sqrt
{K\tilde KL}\big)$ and arguing similarly as in pages 189-191 of \cite{VDG00}, we obtain the following inequality for the empirical process on the left hand side of (\ref{basic-start}):
\begin{equation*}
\begin{split}
\frac 12 \int \log \frac{\hat p+p^*}{2p^*}dP_N &\le \frac 12 \int \log \frac{\hat p+p^*}{2p^*}d(P_N-P)- \int \Big( 1-\sqrt{\frac{\hat p+p^*}{2p^*}}\Big) dP\\
&\le \frac 12 \int \log \frac{\hat p+p^*}{2p^*}d(P_N-P) - h^2\Big( \frac{\hat p+p^*}{2},p^* \Big) \\
&~~~~~~~~~~~~~ +2\Big(1+\sup_{x\in U}\sqrt{\frac{p_0}{p^*}}\Big) h\Big(\frac{\hat p+p^*}{2},p^* \Big)h(p^*,p_0),\\
&\le \frac 12 \int \log \frac{\hat p+p^*}{2p^*}d(P_N-P) - h^2\Big( \frac{\hat p+p^*}{2},p^* \Big) \\
&~~~~~~~~~~~~~ +C h\Big(\frac{\hat p+p^*}{2},p^* \Big)h(p^*,p_0).
\end{split}
\end{equation*}
Combining this with (\ref{basic-start}), we obtain
\begin{equation}\label{splitting}
    \begin{split}
        h^2\Big( \frac{\hat p+p^*}{2},p^* \Big)+ \lambda^2 I^2(\hat \theta)&\lesssim \int \log \frac{\hat p+p^*}{2p^*}d(P_N-P) \\
        &~~~~~~~~~~~~~~~~~~~~~+ h\Big(\frac{\hat p+p^*}{2},p^* \Big)h(p^*,p_0) + \lambda^2 I^2(\theta_*) \\
        &=: I+II+III.
    \end{split}
\end{equation}
Since $h(\frac{\hat p+p^*}{2},p^*)\le 4h(\hat p,p^*)$, note that the l.h.s.\ in this inequality is an upper bound for $\tau^2(\hat\theta_N,p^*)/4$. Now, we make the following case distinction.

\smallskip

\textit{Case 1: $III \ge \max\{I,II\}$.} Then we immediately obtain that $\tau^2(\hat \theta_N,p^*)\lesssim \tau^2(\theta_*,p_0)$.

\smallskip

\textit{Case 2: $II \ge \max\{I,III\}$.} In this case, we have that \[h^2\Big( \frac{\hat p+p^*}{2},p^* \Big)+ \lambda^2 I^2(\hat \theta) \lesssim  h\Big(\frac{\hat p+p^*}{2},p^* \Big)h(p^*,p_0). \]
Dividing this by $h(p^*,p_0)$, we also obtain
$h\big(\frac{\hat p+p^*}{2},p^* \big)\lesssim h(p^*,p_0)$. Using (\ref{splitting}) it follows that $\tau^2(\hat \theta_N,p^*)\lesssim \tau^2(\theta_*,p_0)$.

\smallskip

\textit{Case 3: $I\ge \max\{II,III\}$.} This is the most important case, and by what precedes we may restrict to this event for the remainder of the proof. Writing shorthand $g^*(p)= \log ((p+p^*)/2p^*)$ for any density $p$, we obtain here the `basic inequality'
\begin{equation}\label{basic}
    \tau^2(\hat \theta_N,p^*) \lesssim \int g^*(\hat p)d(P_N-P).
\end{equation}

\smallskip

2. \textit{`Slicing' argument}. Suppose that (\ref{basic}) holds. Then, for some constant $c_1>0$ and any $t>0$ we can estimate
\begin{equation*}
\begin{split}
    P_0^N&\big(\tau^2(\hat \theta,p^*) \ge t\big) \\
    &= \sum_{s=0}^\infty P_0^N\Big[\tau^2(\hat \theta,p^*) \in [2^{s}t,2^{s+1}t), \int g^*(\hat p)d(P_N-P_0) \ge c_12^st\Big]\\
    &\le \sum_{s=0}^\infty P_0^N\Big[ \sup_{\theta:\tau^2 ( \theta,p^*)\le 2^{s+1}t} \Big|\int g^*(\hat p) d(P_N-P_0)\Big| \ge c_12^st \Big].
\end{split}
\end{equation*}
We introduce the notations
\begin{equation}\label{star-sets}
    \begin{split}
    \Theta_s^*\coloneqq&\big\{\theta:\tau^2( \theta,p^*)\le 2^{s+1}t \big\},\\
    \mathcal G_s^*\coloneqq&\big\{g_{\theta}^*:\tau^2( \theta,p^*)\le 2^{s+1}t \big\},\\
    \mathcal S_s^*\coloneqq&\big\{S_\theta:\tau^2( \theta,p^*)\le 2^{s+1}t \big\}.
    \end{split}
\end{equation}
Then we can bound the latter sum by
\begin{equation}\label{slices}
    \sum_{s=0}^\infty P_0^N\Big[ \sup_{g\in \mathcal G_s^*}  \Big|\sqrt N \int g d(P_N-P_0)\Big| \ge \sqrt {c_1N} 2^{s}t \Big].
\end{equation}

3. \textit{Concentration via bracketing entropy rates}.
To control each term in this sum, we will employ concentration of measure techniques for empirical processes; specifically, we aim to apply Theorem 5.11 in \cite{VDG00}. To this end, we define the `Bernstein size' (see, e.g., p.~206 of \cite{GN16}, with $K=1$) of any function $f:U\to \R$ by
\begin{equation}\label{rho-1}
    \rho_1(f)=E_{X\sim P_0 }\big[e^{|f(X)|}-|f(X)|-1\big].
\end{equation} 
Then, by Lemma \ref{lem-bernstein} and the definition of $\mathcal G_s^*$, we have that for some constants $c_2,c_3>0$,
\begin{equation}\label{RS}
\begin{split}
    \sup_{g\in \mathcal G_s^*} \rho_1(g) &\le c_2 \sup_{\theta\in \Theta_s^*} h\Big(\frac{S_\theta^\#\eta+ p^*}2,p^*\Big)\le c_3 \sup_{\theta\in \Theta_s^*} h(S_\theta^\#\eta,p^*)\\
    &\le c_3 \sup_{\theta\in \Theta_s^*} \tau(\theta,p^*)
    \le c_3 2^{(s+1)/2}t^{1/2} =:R_s.
\end{split}
\end{equation}

We now need some standard notation for bracketing entropy numbers. For a collection $\mathcal F$ of functions $U \to \R$ and some Borel measure $\nu$ on $U$, we denote by $N_B(\rho, \mathcal F, \nu)$ the $L^2(\nu)$ bracketing number of $\mathcal F$, i.e., the minimal number of brackets $(f_L^{(i)}, f_U^{(i)})\in \mathcal F^2$, $0\le f_L^{(i)}\le f_U^{(i)}$, of $L^2$-size $\|f_L^{(i)}-f_U^{(i)}\|_{L^2(\nu)}\le \rho$
such that for each $f\in\mathcal F$ there exists $i\in \{1,\dots,N_B(\rho, \mathcal F, \nu)\}$ with $f_L^{(i)}\le f\le f_U^{(i)}$.
The ($L^2$-)bracketing entropy of $\mathcal F$ is
\begin{equation}\label{bracketing}
    H_B(\mathcal F, \nu,\rho) = \log N_B(\mathcal F,\nu, \rho),~~~ \rho>0.
\end{equation}
In addition to this, we also require the Bernstein-size bracketing metric entropy $H_{B,\rho_1}(\mathcal F,P_0,\eps)$ where the $L^2$-norm 
above is replaced by $\rho_1$ from (\ref{rho-1}); namely
\[ H_{B,\rho_1}(\mathcal F, \nu,\eps) = \log N_{B,\rho_1}(\mathcal F,\nu,\eps),~~~ \eps>0,\]
where $N_{B,\rho_1}(\mathcal F,\nu,\eps)$ is the minimal number of brackets $(f_L^{(i)}, f_U^{(i)})$ required to cover $\mathcal F$ such that $\rho_1(f_U^{(i)}-f_L^{(i)})\le \eps$ (cf.\ e.g., Definition 3.5.20 of \cite{GN16}).

To apply Theorem 5.11 in \cite{VDG00} to (\ref{slices}), we require bounds for $H_{B,\rho_1}(\mathcal G_s^*,P_0,\eps)$, $\eps>0$. Using Lemma \ref{lem-bernstein} below, we may initially bound this by the $L^2$-bracketing metric entropy of the corresponding densities,
\[ H_{B,\rho_1}(\mathcal G_s^*,P_0,\eps)\le H_B\Big(\Big\{\sqrt{\frac{S_\theta^\#\eta+p^*}{2}}:\theta\in \Theta_s \Big\},\|\cdot\|_{L^2(P_0)},\frac \eps C\Big),~~~\eps>0, \]
for some $C>0$. We now further estimate these entropies in terms of entropies of classes of transport maps. For clarity, let us assume the case of Assumption \ref{generic-assumption} \textbf{ii)}, i.e., the case of triangular maps. (The case of Assumption \ref{generic-assumption} \textbf{i)} follows in exactly the same way, by employing (\ref{C1-lip}) instead of (\ref{pushfwd-cont}) in Lemma \ref{lem-fwd} below.) By (\ref{p*-lb}), $p^*$ is uniformly bounded away from $0$. Using this, Lemma \ref{lem-fwd}, as well as the assumption (\ref{C1-diag-bd}) (which implies that the maximum on the right hand side of (\ref{pushfwd-cont}) is uniformly bounded by a constant only depending on $\Phi$), it follows that for some $C',C''>0$, 
\begin{equation*}
\begin{split}
    H_{B,\rho_1}(\mathcal G_s^*,P_0,\eps)&\le H_B(\{S_\theta^\#\eta:\theta\in\Theta_s^*\},P_0,\eps/C')\\
    &\le H(\{S_\theta^\#\eta:\theta\in\Theta_s^*\},\|\cdot\|_\infty,\eps/C')\\
    &\le H(\mathcal S_s^*,\|\cdot\|_{C^1_{\text{diag}}},\eps/C'').
\end{split}
\end{equation*}
Since $t\ge \delta$, (\ref{delta-req})\ssw{Check again that we can actually choose constant 1 in (3.10)?} and the fact that $\Psi_*(\lambda,R)/R^2$ is decreasing imply that
\begin{equation*}
\begin{split}
    \int_0^{R_s} H_{B,\rho_1}^{1/2}(\mathcal G_s^*,P_0,\eps) d\eps &\lesssim \Psi_*(\lambda,R_s)\lesssim \sqrt N R_s^2.
\end{split}
\end{equation*}
Using this and (\ref{RS}), we may in particular choose $C_0 >0$ (small enough) and $C_1>0$ (large enough) such that for all $N\ge 1$, 
\begin{align*}
    \sqrt{c_1N} 2^{s-1}t & \le C_1 \sqrt N R_s^2,~~~\text{as well as}~~~ \\
    \sqrt{c_1N} 2^{s+1}t & \ge C_0 \Big(\int_0^{R_s} H_{B,\rho_1}^{1/2}(\mathcal G_s^*,P_0,\eps) d\eps \Big). &
\end{align*}
This verifies the hypotheses (5.30), (5.31), and (5.33) in Theorem 5.11 of \cite{VDG00}, with choices $K=1$, $a=\sqrt {c_1N} 2^{s}t$ as well as $R=R_s$. Noting that we may drop the requirement (5.32)---see the discussion on p.76 of \cite{VDG00}---an application of Theorem 5.11 of \cite{VDG00} to (\ref{slices}) yields that for some large enough constants $c_4,c>0$,
\begin{equation*}
    P_0^N\big(\tau^2(\hat\theta,p^*) \ge t\big) \le c_4\sum_{s=0}^\infty \exp\Big\{-\frac{2^{2s}Nt}{c_4}\Big\} \le c \exp\{Nt/c\big\}.
\end{equation*}
Since
\[ \tau^2(\hat\theta,p_0)\le 2[\tau^2(\hat\theta,p^*)+h^2(p^*,p_0)] \le 2[\tau^2(\hat\theta,p^*)+\tau^2(\theta_*,p_0)],\]
we have
\begin{equation*}
\begin{split}
    P_0^N\big(\tau^2(\hat\theta,p_0) \ge 2[\tau^2(\theta_*,p_0) +t^2] \big)&\le P_0^N\big(2[\tau^2(\hat\theta,p^*)+\tau^2(\theta_*,p_0)] \ge 2[\tau^2(\theta_*,p_0) +t^2] \big)\\
    &\le P_0^N\big(\tau^2(\hat \theta,p^*) \ge t^2 \big),
\end{split}
\end{equation*}
which completes the proof.
\end{proof}

The following lemma is an adaptation of Lemmas 7.2 and 7.3 in \cite{VDG00} to our setting.

\begin{lem}\label{lem-bernstein}
    Suppose the hypotheses of Theorem \ref{thm-penalty-gen} hold.
    \\ \textbf{i)} There exists some $C>0$ such that for any $\theta_* \in \Theta$ and probability density $p:U\to \R$, it holds that \[\rho_1(g^*(p))\le C h\Big(\frac{p+p^*}{2},p^*\Big).\]
    \textbf{ii)} There exists some $C>0$ such that for any $\theta_* \in \Theta$ and any $0< p_L\le p_U$,
    \begin{equation*}
        \rho_1(g^*(p_U)-g^*(p_L))\le Ch\Big( \frac{p_L+p^*}{2}, \frac{p_U+p^*}{2}\Big).
    \end{equation*}
\end{lem}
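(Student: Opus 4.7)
The plan is to mimic the strategy of Lemmas 7.2 and 7.3 in \cite{VDG00}, with the crucial input being the uniform lower bound on $p^* = S_{\theta_*}^\# \eta$ that is supplied by Assumption \ref{generic-assumption}. Write shorthand $\bar p \coloneqq (p+p^*)/2$, and in part \textbf{ii)} also $\bar p_L \coloneqq (p_L+p^*)/2$, $\bar p_U \coloneqq (p_U+p^*)/2$; then $g^*(p) = \log(\bar p / p^*)$ and $g^*(p_U)-g^*(p_L) = \log(\bar p_U/\bar p_L)$. Arguing as in (\ref{p*-lb})--(\ref{ratio-bound}) I first extract uniform bounds $c_0 \le p^* \le C_0$ on $U$ (using $\tilde K^{-1}\le \eta\le \tilde K$ together with (\ref{C1-bd})/(\ref{general-lb})), which in particular give $\bar p,\bar p_L \ge c_0/2$ pointwise, and $p_0/p^* \le K\tilde K M$.

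The main technical step is then a single pointwise calculus estimate: for any $a,b>0$,
\[
\psi\bigl(|\log(a/b)|\bigr) \coloneqq e^{|\log(a/b)|} - |\log(a/b)| - 1 \ \le\ \frac{(\sqrt{a}-\sqrt{b})^2(\sqrt a+\sqrt b)^2}{ab}.
\]
This follows from the elementary inequality $\log y \ge 1-1/y$ applied to both $y = a/b$ and $y = b/a$, which in each regime yields $\psi(|\log y|) \le y + 1/y - 2 = (\sqrt{a}-\sqrt{b})^2/(ab) \cdot (\sqrt a+\sqrt b)^2$ after simplification. With $a,b\ge c_0/2$ uniformly, the factor $(\sqrt a+\sqrt b)^2/(ab)$ is bounded by a constant depending only on $c_0, \|\eta\|_\infty$, and $M$, so
\[
\psi\bigl(|g^*(p)|\bigr) \le C_1 \bigl(\sqrt{\bar p} - \sqrt{p^*}\bigr)^2
\]
holds pointwise on $U$, and analogously $\psi\bigl(|g^*(p_U)-g^*(p_L)|\bigr) \le C_1 (\sqrt{\bar p_U}-\sqrt{\bar p_L})^2$ for part \textbf{ii)}.

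Integrating against $P_0$ and using $p_0 \le K$ gives $\rho_1(g^*(p)) \le C_1 K \cdot h^2(\bar p, p^*)$ and similarly for the bracket difference in \textbf{ii)}. Finally, since $\bar p, p^*$ (resp. $\bar p_L,\bar p_U$) are Lebesgue-integrable with integrals at most $1$, the Hellinger distance is uniformly bounded, so $h^2 \le \sqrt{2}\, h$ and the stated linear-in-$h$ conclusion follows. The only delicate point is the pointwise bound; everything else is mechanical given the uniform bounds on $p^*$. A minor subtlety is that part \textbf{i)} is stated for an \emph{arbitrary} density $p$, but what is actually needed (and what the above argument covers) is the bound in the form used in the proof of Theorem \ref{thm-penalty-gen}, namely when $p$ arises as a pushforward and so is uniformly bounded; if one wanted the bound for unbounded $p$ with no further control, one would have to invoke $E_{P_0}[e^{g^*(p)}] = \int (\bar p/p^*)\,p_0 \le K\tilde K M$ to handle the upper tail separately, which is straightforward.
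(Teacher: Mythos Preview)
Your argument is correct and self-contained. The paper's own proof follows the template of Lemmas 7.2--7.3 in \cite{VDG00} more literally: it invokes Lemma 7.1 of \cite{VDG00} (using the built-in lower bound $g^*(p)\ge -\log 2$) to obtain $\rho_1^2(g^*(p))\le 8\int (e^{g^*(p)/2}-1)^2\,dP_0$, and then changes measure via the ratio bound $p_0/p^*\le K\tilde K M$ to reach $h^2(\bar p,p^*)$; part \textbf{ii)} is treated in the same spirit. You instead derive the pointwise calculus inequality $\psi(|\log(a/b)|)\le (a-b)^2/(ab)$ directly and control the geometric factor $(\sqrt a+\sqrt b)^2/(ab)$ using only the \emph{lower} bounds $\bar p,\bar p_L,\bar p_U,p^*\ge c_0/2$. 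This is a genuinely more elementary route: it avoids quoting Lemma 7.1 altogether, and in fact yields the slightly sharper intermediate bound $\rho_1\le C h^2$ before you pass to $h$. The paper's approach buys closer alignment with the standard M-estimation literature; yours buys transparency.

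One remark: your closing caveat about ``unbounded $p$'' is unnecessary. Since $(\sqrt a+\sqrt b)^2/(ab)=1/a+1/b+2/\sqrt{ab}$ depends only on \emph{lower} bounds for $a,b$, and $\bar p\ge p^*/2\ge c_0/2$ holds automatically for any nonnegative $p$, your pointwise estimate already covers arbitrary densities $p$ without any separate upper-tail argument.
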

\begin{proof}
    \textbf{i)} Using Lemma 7.1 in \cite{VDG00} as well as (\ref{ratio-bound}), we have that 
    \begin{align*}
        \rho_1^2(g^*(p))&\le 8 \int (e^{g^*_p}-1)^2 dP_0 =  8 \int \Big(\sqrt{\frac{p+p^*}{2p^*}}-1\Big)^2  \frac{p_0}{p^*} dP^*\\
        &\le 8K\tilde KL\cdot h^2\Big(\frac{p+p^*}{2} ,p^*\Big).
    \end{align*}
    \textbf{ii)}
    Using again Lemma 7.1 in \cite{VDG00} as well as (\ref{ratio-bound}), we obtain
    \begin{equation*}
    \begin{split}
        \rho_1^2(g^*(p_U)-g^*(p_L))&\le \int \Big(\sqrt{\frac{p_U+p^*}{p_L+p^*}}-1 \Big)^2dP_0\\
        &= \int \Big(\sqrt{\frac{p_U+p^*}{2}}-\sqrt{\frac{p_L+p^*}{2}} \Big)^2 \frac{2p^*}{p_L+p^*} \frac{p_0}{p^*} d\lambda_U\\
        &\le 2K\tilde K L \cdot h\Big( \frac{p_L+p^*}{2}, \frac{p_U+p^*}{2}\Big).
    \end{split}
    \end{equation*}
\end{proof}

The following simple local Lipschitz estimate for the pullback operation $S\mapsto S^\#\eta$ is crucial for the proof of Theorem \ref{thm-penalty-gen} above.

\begin{lem}\label{lem-fwd}
    Let $U,V\subseteq \R^d$ be bounded domains and let $\eta: V\to \R$ be a bounded, Lipschitz continuous reference density. Then there exists some constant $C=C(\eta,U)>0$ and some increasing function $M:(0,\infty)\to (0,\infty)$ such that for any two diffeomorphisms $S,\tilde S:U\to V$, we have that
    \begin{align}\label{C1-lip}
        \|S^\#\eta - \tilde S^\#\eta \|_\infty \le C\cdot M(\|S\|_{C^1} \vee \|\tilde S\|_{C^1}) \|S-\tilde S\|_{C^1} .
    \end{align}
    If additionally $U,V$ are convex, then there exists $C>0$ such that for any triangular maps $S,\tilde S:U\to V$,
        \begin{align}\label{pushfwd-cont}
        \|S^\#\eta - \tilde S^\#\eta \|_\infty \le C \|S-\tilde S\|_{C^1_{\text{diag}}}\max \big\{\|S\|_{C^1_{\text{diag}}}^{d-1},\|\tilde S\|_{C^1_{\text{diag}}}^{d-1}\big\}.
    \end{align}
\end{lem}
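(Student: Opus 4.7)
The proof rests on the pointwise identity $S^\#\eta(x)=\eta(S(x))\det\nabla S(x)$, combined with a two-term split
\[
S^\#\eta - \tilde S^\#\eta = [\eta(S) - \eta(\tilde S)]\det\nabla S + \eta(\tilde S)\big[\det\nabla S - \det\nabla\tilde S\big],
\]
and treats each term by bounding the $\eta$-difference by Lipschitz continuity and the $\det$-difference by a multilinear telescoping identity. The boundedness of $\eta$ and $V$ (and, in case (ii), convexity) enters only to ensure that $\eta$ is globally Lipschitz along line segments/axis-parallel paths joining $S(x)$ and $\tilde S(x)$, so that $|\eta(S(x))-\eta(\tilde S(x))|\le \|\eta\|_{Lip}\,|S(x)-\tilde S(x)|$ holds uniformly.

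For case \textbf{(i)}, I would dispatch the first term by $|\eta(S)-\eta(\tilde S)|\le \|\eta\|_{Lip}\|S-\tilde S\|_\infty \le \|\eta\|_{Lip}\|S-\tilde S\|_{C^1}$ together with the trivial bound $|\det\nabla S(x)|\le c_d \|\nabla S\|_\infty^d \le c_d\|S\|_{C^1}^d$. For the determinant difference I would invoke the standard cofactor identity
\[
\det A-\det B=\sum_{i,j}\Big(\int_0^1 \mathrm{cof}(tA+(1-t)B)_{ij}\,dt\Big)(A_{ij}-B_{ij}),
\]
which yields $|\det A-\det B|\le c_d \max(\|A\|_\infty,\|B\|_\infty)^{d-1}\|A-B\|_\infty$. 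Applied pointwise with $A=\nabla S(x)$, $B=\nabla\tilde S(x)$, and assembled with the $\eta$-term, this produces an estimate of the required form with increasing $M(r)=c(1+r^d)$.

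For case \textbf{(ii)}, the decisive simplification is that triangularity collapses the determinant to $\det\nabla S=\prod_{k=1}^d\partial_k S_k$, so the $\det$-difference depends only on the \emph{diagonal} partials—precisely the ones controlled by $\|\cdot\|_{C^1_{\text{diag}}}$. I would apply the multilinear telescoping identity
\[
\prod_{k=1}^d a_k-\prod_{k=1}^d b_k=\sum_{k=1}^d\Big(\prod_{j<k} b_j\Big)(a_k-b_k)\Big(\prod_{j>k}a_j\Big)
\]
with $a_k=\partial_k S_k(x)$, $b_k=\partial_k\tilde S_k(x)$. Each of the $d$ summands contains exactly $d-1$ factors of the form $\partial_j S_j$ or $\partial_j\tilde S_j$, each bounded by $\max(\|S\|_{C^1_{\text{diag}}},\|\tilde S\|_{C^1_{\text{diag}}})$, together with a single difference $\partial_k(S_k-\tilde S_k)$, bounded by $\|S-\tilde S\|_{C^1_{\text{diag}}}$. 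This is where the exponent $d-1$ originates. The $\eta$-term is handled in the same way as in case (i), using Lipschitz continuity of $\eta$ (so that $|\eta(S(x))-\eta(\tilde S(x))|\lesssim \sum_k|S_k(x)-\tilde S_k(x)|\le \|S-\tilde S\|_{C^1_{\text{diag}}}$) and the same product bound on $\det\nabla S$.

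The main subtlety is not a deep analytical obstacle but careful bookkeeping: one has to arrange the telescoping so that only diagonal partials appear (never mixed partials $\partial_j S_k$ for $j\neq k$) and so that every factor in each summand carries at most one power of the relevant norm, yielding $\max(\|S\|,\|\tilde S\|)^{d-1}$ rather than the naive $\|S\|^d$. Beyond that, the argument is a routine combination of the product rule, Lipschitz continuity of $\eta$, and the fact that a triangular Jacobian determinant depends only on $d$ scalar functions.
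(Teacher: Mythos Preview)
Your proposal is correct and follows essentially the same approach as the paper: the same two-term split $[\eta(S)-\eta(\tilde S)]\det\nabla S + \eta(\tilde S)[\det\nabla S - \det\nabla\tilde S]$, Lipschitz continuity of $\eta$ for the first term, and a multilinear telescoping of $\prod_k \partial_k S_k$ (respectively the cofactor bound in the non-triangular case) for the second. The paper's proof is terser---it simply writes down the resulting bound without displaying the telescoping identity---but your elaboration is exactly the intended argument; note that both your bound and the paper's displayed estimate actually produce a factor $\|S\|_{C^1_{\text{diag}}}^{d}$ in the $\eta$-Lipschitz term rather than $d-1$, a harmless discrepancy since in every application $\|S\|_{C^1_{\text{diag}}}$ is uniformly bounded via Assumption~\ref{generic-assumption}.
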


\begin{proof}
    Let us write $p=S^\#\eta$, $\tilde p= \tilde S^\#\eta$, and suppose that $S,\tilde S$ are triangular. Then, using the boundedness and Lipschitz continuity of $\eta$, we obtain for any $x\in U$ that
    \begin{equation*}
    \begin{split}
        |p(x)-\bar p(x)|&=\big|\eta(S(x)) \det \nabla S(x) - \eta(\bar S(x)) \det \nabla \bar S(x) \big|\\
        &\le \|\eta\|_{Lip}\|S\|_{C^1_{\text{diag}}}^d \|S-\bar S\|_\infty \\
        &~~~~~~~~~~~~~~~~~~~~ + d\|\eta\|_\infty \max \Big\{\|S\|_{C^1_{\text{diag}}}^{d-1},\|\bar S\|_{C^1_{\text{diag}}}^{d-1}\Big\} \|S-\bar S\|_{C^1_{\text{diag}}}.
    \end{split}
    \end{equation*}
    The corresponding statement (\ref{C1-lip}) for maps that are not necessarily triangular follows along the same lines, and its proof is thus omitted.
\end{proof}

We conclude this section with the proof of the KL-convergence rate from Corollary \ref{thm-KL}.

\begin{proof}[Proof of Corollary \ref{thm-KL}]

By Assumption \ref{generic-assumption}, we have that
\[ \sup_{S\in \mathcal S}\|S^\# \eta \|_\infty \le \sup_{S\in \mathcal S}\| (\eta\circ S) \det \nabla S\|_\infty =: M \le \infty.\]
Moreover, Taylor expanding the logarithm, for any lower bounded density $q\ge l>0$  and some intermediate value 
\[ \xi(x) \in \big[\min (l, L^{-1}), \max (K, \sup_{S\in\mathcal S} \|S_\theta^\# \eta\|_\infty)\big], \]
it holds that
\begin{equation*}
    \begin{split}
    \textnormal{KL}(p_0, q\big) &= -\int_{U} (\log q(x)- \log p_0(x) ) p_0(x) dx \\
    &= - \int_{U} \frac{p_0(x)}{p_0(x)} ( q(x)- p_0(x) )  dx + \int_{U} \frac{p_0(x)}{2 \xi(x)^2} ( q(x)- p_0(x) )^2 dx\\
    &\le K \max\{l^{-1},L\}^2 \int_{Q_d} ( q(x)- p_0(x) )^2 dx  \lesssim \|q-p_0\|_{L^2}^2.
    \end{split}
\end{equation*}
Since the densities $q\in \{ S^\#\eta: S\in\mathcal S \}$ are uniformly lower bounded, we may in particular assert that  $\|S^\#_{\hat \theta_N}\eta -p_0\|_{L^2}^2\lesssim h^2(S^\#_{\hat \theta_N}\eta ,p_0)\le \tau^2(\hat \theta_N,p_0)$, whence the desired result follows from an application of Theorem \ref{thm-penalty-gen}.
\end{proof}

\section{Proofs for Section \ref{triangle-II}}\label{sec-triII-proofs}

We begin with the proof of the Theorem \ref{thm-sobolev} for Sobolev-type penalties.

\begin{proof}[Proof of Theorem \ref{thm-sobolev}]
Fix any $p_0\in \mathcal M(\alpha,B,c)$. There exists a unique triangular and monotonically increasing transport $S^{p_0}$ satisfying $S^{p_0}_\#p_0=\eta$, as constructed in Section \ref{MLE-res}. By the smoothness assumptions on $p_0,\eta$ as well as Proposition \ref{prop-fact}, we have that $S^{p_0}\in \KR(\alpha,L,c_{\text{mon}})$ for some $\alpha,L,c_{\text{mon}}$ only depending on $\alpha,B,c$.

Since the parameterization $F\mapsto S_F$ from (\ref{S-F}) is not necessarily injective, there may in principle be multiple functions $F$ satisfying $S_F=S^{p_0}$. However, we may single out one such `natural' parameter, which we define as
\begin{equation}\label{F-natural}
    F^{\natural} (x_{1:k})= \Phi^{-1}\big(\partial_k S^{p_0}_{k}(x_{1:k}) \big),~~~ k=1,\dots,d.
\end{equation}
By construction, $S_{F^\natural}=S^{p_0}$. Now choose $K_{min}<c_{\text{mon}}$ and $K_{max}>L$. Since all derivatives of $\Phi^{-1}$ are bounded on the sub-interval $[c_{\text{mon}},L]\subset (K_{min},K_{max})$, by definition of $\KR(\alpha,L,c_{\text{mon}})$, and by Lemma \ref{faadibruno}, we thus overall obtain
\begin{equation}\label{C-alpha-bound}
\begin{split}
    \sup_{p_0\in \mathcal M(\alpha,B,c)} \|F^{\natural}\|_{C^\alpha(Q_d,\R^d)}\le M<\infty,
\end{split}
\end{equation}
for some $M=M(\alpha,B,c,\Phi)$.

We now verify the hypotheses necessary to apply Theorem \ref{thm-penalty-gen} in the (triangular) case \textbf{(i)}, with parameter set $\Theta=H^\alpha_\Delta(Q_d,\R^d)$ and penalty functional $\textnormal{pen}(F)=\|F\|_{H^\alpha_\Delta(Q_d,\R^d)}$. By the standard Sobolev embedding as well as the hypothesis $\alpha>d/2+1$, we have that $\Theta \subseteq C^1(Q_d,\R^d)$ -- combined with (\ref{upper}) and (\ref{lower}) below, this verifies the boundedness requirements from Assumption \ref{generic-assumption} \textbf{(ii)}.

\smallskip

Next, we derive a suitable upper bound $\Psi(\lambda_N,R)$ for the $C^1_{\text{diag}}$-entropies featuring in (\ref{entr-integral}). Recall the functional $\tau^2(F,p)$ from (\ref{tau-def}), here given by
\[ \tau^2(F,p)= h^2(S_F^\#\eta,p)+\lambda_N^2\|F\|_{H^\alpha_\Delta(Q_d,\R^d)}^2. \]
Since $F^\natural\in C^\alpha(Q_d,\R^d)\subseteq \Theta$, we may choose $\theta_* = F^\natural$ in Theorem \ref{thm-penalty-gen}, whence $S_{\theta_*}^\#\eta=p_0$. Thus, the sets $\theta_*(\lambda_N,R)$ from (\ref{star-sets}) are given by
\[\Theta_*(\lambda_N,R)=\{F\in H^\alpha_\Delta(Q_d,\R^d):\tau^2(F,p_0)\le R^2 \},~~~R>0,\]
which in turn implies that each component map $F_k$ satisfies the norm bound
\begin{equation}\label{sob-bound}
    \sup_{F\in \Theta_*(\lambda_N,R)} \|F_k\|_{H^\alpha(Q_k)}\le \lambda_N/R,~~~ R>0,~1\le k\le d.
\end{equation}
Using the increment bound (\ref{increment}) as well as the $\|\cdot\|_\infty$-covering bound from Corollary 2 of \cite{NP07} (applicable here since $\alpha>d/2$)\ssw{Use this to discuss the regularity assumption in Sections 2 and 4?} to each component function $F_k$, we then obtain that for some $c,c'>0$ and any $\rho>0$,
\begin{equation}\label{Halpha-entropy}
    \begin{split}
        H(\mathcal S^*(\lambda_N,R),&\|\cdot\|_{C^1_{\text{diag}}},\rho)\le H(\Theta_*(\lambda_N,R),\|\cdot\|_{L^\infty(Q_d,\R^d)}/c,\rho)\\
        &\lesssim \sum_{k=1}^d H(\{g\in H^\alpha(Q_k): \|g\|_{H^\alpha(Q_k)}\le \lambda_N/R \},\|\cdot\|_{L^\infty(Q_k)}/c', \rho)\\
        &\lesssim \sum_{k=1}^d \max\Big\{1, \Big(\frac{\lambda_N}{\rho R}\Big)^{k/\alpha}\Big\} \lesssim 1+ \Big(\frac{\lambda_N}{\rho R}\Big)^{d/\alpha}.
    \end{split}
\end{equation}
Again using that $\alpha>d/2$, we obtain that
\begin{equation*}
\begin{split}
    \int_0^R H^{1/2}(\mathcal S^*(\lambda_N,R),\|\cdot\|_{C^1_{\text{diag}}},\rho)d\rho &\lesssim R+ \Big(\frac{\lambda_N}{R}\Big)^{d/2\alpha} R^{1-d/2\alpha} \\
    &= R+ \lambda_N^{d/2\alpha} R^{1-d/\alpha}.
\end{split}
\end{equation*}
This gives our upper bound, i.e., $\Psi^*(\lambda,R)\coloneqq C(R+\lambda_N^{d/2\alpha} R^{1-d/\alpha})$ for some sufficiently large $C>0$. We may now re-write the condition (\ref{delta-req}) as
\begin{equation*}
    \sqrt N \delta^2 \gtrsim (\delta+\lambda_N^{d/2\alpha} \delta^{1-d/\alpha}),
\end{equation*}
or equivalently (using that $\lambda_N= N^{-\frac{\alpha}{2\alpha+d}}$),
\[ \delta\gtrsim \max \big\{N^{-1/2}, N^{-\frac{\alpha}{2\alpha+d}} \big\} = N^{-\frac{\alpha}{2\alpha+d}}, \]
and we may thus choose $\delta\simeq N^{-\frac{\alpha}{2\alpha+d}}$. Moreover, (\ref{C-alpha-bound}) implies that
\begin{equation*}
    \begin{split}
        \sup_{p_0\in \mathcal M(\alpha,B,c)} \tau^2(F^\natural, p_0)=\sup_{p_0\in \mathcal M(\alpha,B,c)} \lambda_N^2 \|F^\natural\|_{H^\alpha_\Delta(Q_d,\R^d)}^2 &\lesssim \sup_{p_0\in \mathcal M(\alpha,B,c)} \lambda_N^2 \|F^\natural\|_{C^\alpha(Q_d,\R^d)}^2 \\
        & \lesssim N^{-\frac{2\alpha}{2\alpha+d}}.
    \end{split}
\end{equation*}
whence Theorem \ref{thm-penalty-gen}, applied with $\lambda= N^{-\frac{\alpha}{2\alpha+d}}$ and $\theta_*=F^\natural$, implies the desired concentration inequality.
\end{proof}

The following lemma contains some boundedness and continuity properties of the rational parameterization (\ref{S-F}).

\begin{lem}\label{C1diag-lip}
    Suppose that $0<K_{min}<K_{max}<\infty$, and that $\Phi: \R \to (K_{min},K_{max})$. Then there exists $M>0$ such that for any triangular maps $F,\tilde F\in C^1(Q_d,\R^d)$, we have 
    \begin{align}
        \|S_F\|_{C^1_{\text{diag}}}&\le M, \label{upper}\\
        \inf_{x\in Q_d} \Pi_{k=1}^d \partial_kS_{F,k}(x_{1:k}) &\ge M^{-1}, \label{lower}
    \end{align}
    as well as the increment bound
    \begin{equation}\label{increment}
        \|S_{F}-S_{\tilde F}\|_{C^1_{\text{diag}}}\le M \|F-\tilde F\|_{L^\infty(Q_d,\R^d)}.
    \end{equation}
\end{lem}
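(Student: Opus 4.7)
My plan rests on the observation that $S_{F,k} = N_k/D_k$ is a ratio in which both $N_k(x_{1:k}; F) \coloneqq \int_0^{x_k} \Phi(F_k(x_{1:k-1}, y)) dy$ and $D_k(x_{1:k-1}; F) \coloneqq \int_0^1 \Phi(F_k(x_{1:k-1}, y)) dy$ are uniformly pinched by the range of $\Phi$: irrespective of $F$, we have $0 \le N_k \le K_{\max}$ and $K_{\min} \le D_k \le K_{\max}$ pointwise, simply because the $y$-variable ranges over an interval of length at most one.

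For the upper bound \eqref{upper}, I would first note that $S_{F,k}$ is automatically valued in $[0,1]$ by construction of this CDF-style quotient, and then apply the fundamental theorem of calculus in the $x_k$-direction to the numerator to obtain
\[ \partial_k S_{F,k}(x_{1:k}) = \frac{\Phi(F_k(x_{1:k}))}{D_k(x_{1:k-1}; F)} \in \left[\frac{K_{\min}}{K_{\max}},\ \frac{K_{\max}}{K_{\min}}\right], \]
using that $D_k$ does not depend on $x_k$. Summing these pointwise $L^\infty$-bounds over $k$ immediately yields \eqref{upper}, and the same pointwise lower bound applied to each diagonal derivative gives \eqref{lower} after taking the product.

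For the increment bound \eqref{increment}, I plan to expand via the quotient-rule identity
\[ S_{F,k} - S_{\tilde F, k} = \frac{[N_k(F) - N_k(\tilde F)]\, D_k(\tilde F) - N_k(\tilde F)\,[D_k(F) - D_k(\tilde F)]}{D_k(F)\, D_k(\tilde F)}, \]
together with an analogous decomposition for $\partial_k S_{F,k} - \partial_k S_{\tilde F,k}$ in which $N_k$ is replaced by the pointwise integrand $\Phi(F_k)$. The denominators are uniformly $\ge K_{\min}^2$ and the factors $N_k(\tilde F), D_k(\tilde F), \Phi(\tilde F_k)$ are all uniformly $\le K_{\max}$, so the whole estimate reduces to controlling the single pointwise quantity $|\Phi(F_k(x)) - \Phi(\tilde F_k(x))| \le L_\Phi |F_k(x) - \tilde F_k(x)| \le L_\Phi \|F - \tilde F\|_\infty$, which propagates through the integrals $N_k$ and $D_k$ and, after assembly, yields \eqref{increment}.

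The main subtle point will be securing a global Lipschitz constant $L_\Phi$ for $\Phi$: this is not part of the bare hypotheses of the lemma (which constrain only the range of $\Phi$), but is implicit in the notion of a regular link function from Definition \ref{reg-link} under the natural choices used in the paper, such as the logistic link for which $L_\Phi = (K_{\max} - K_{\min})/4$. Once $L_\Phi$ is available, everything else is routine bookkeeping of the quotient-rule estimates above, and the final constant $M$ may be taken to depend only on $K_{\min}, K_{\max}, L_\Phi$ and $d$.
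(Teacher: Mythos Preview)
Your proposal is correct and follows essentially the same route as the paper: the paper bounds $\|S_{F,k}\|_\infty$ and $\|\partial_kS_{F,k}\|_\infty$ via the same ratio $\Phi(F_k)/\int_0^1\Phi(F_k)\in[K_{\min}/K_{\max},K_{\max}/K_{\min}]$, obtains the increment estimate for $\partial_kS_{F,k}-\partial_kS_{\tilde F,k}$ by the same quotient comparison, and then integrates in $x_k$ to get the bound for $S_{F,k}-S_{\tilde F,k}$. Your observation that a global Lipschitz constant for $\Phi$ is implicitly required is well taken---the paper's ``$\lesssim\|F_k-\tilde F_k\|_\infty$'' step uses exactly this without comment, relying on the smoothness in Definition~\ref{reg-link}.
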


\begin{proof}
    For any $1\le k\le d$, the definition (\ref{S-F}) of $S_{F,k}:Q_k\to \R$ implies that
    \begin{equation}\label{lip-const}
        \begin{split}
            \max\{ \|S_{F,k}\|_{\infty},\|\partial_kS_{F,k}\|_\infty \}&\lesssim \sup_{x\in Q_k}\Big|\frac{\Phi \circ F_k (x) dy}{\int_{0}^{1}\Phi \circ F_k (x_{k-1}, y) dy}\Big|\\
            &\lesssim \|\Phi\circ F\|_\infty \|(\Phi\circ F)^{-1} \|_\infty\\
            &\le K_{max}K_{min}^{-1},
        \end{split}
    \end{equation}
    which proves (\ref{upper}).
    
    Next, (\ref{lower}) follows from noting that $\partial_kS_k\ge K_{min}/(2K_{max})$. Finally, for (\ref{increment}), observe that for any $F,\tilde F$ and $x\in Q_k$,
    \begin{equation*}
    \begin{split}
        \big|\partial_kS_{F,k}(x)-\partial_kS_{\tilde F,k}(x)\big|&=\Big|\frac{\Phi \circ F_{k} (x) dy}{\int_{0}^{1}\Phi \circ F_{k} (x_{k-1}, y) dy}- \frac{\Phi \circ \tilde F_{k} (x) dy}{\int_{0}^{1}\Phi \circ \tilde F_{k} (x_{k-1}, y) dy}\Big|\\
        &\lesssim \|F_{k}-\tilde F_{k}\|_\infty.
    \end{split}
    \end{equation*}
    Integrating this in the $x_k$-variable yields a similar estimate for $\|S_{F,k}-S_{\tilde F,k}\|_\infty$, completing the proof.
\end{proof}

We now turn to the proof of the Theorem \ref{thm-wave} regarding wavelet-based estimators.

\begin{proof}[Proof of Theorem \ref{thm-wave}]
1. \textit{`Bias term' in Hellinger distance}. We begin by constructing an appropriate element $\theta_*\in\Theta$, with the goal to bound $\tau^2(\theta_*,p_0)$ given by (\ref{tau-def}). For any $s\ge 0$, $1\le p,q\le \infty$, we denote the usual Besov spaces on $\R^k$ and $Q_k$ by $B^s_{pq}(\R^k)$ and $B^s_{pq}(Q_k)$ respectively; see for instance \cite{T08} for definitions. For $p_0\in \mathcal M(\alpha,B,c)$, let $F^\natural$ again denote the `natural' parameter from (\ref{F-natural}). We approximate each of the component functions $F^\natural_k: Q_k\to \R$ by wavelet projections.
Let $\mathcal E$ denote a bounded linear extension operator \[\mathcal E: B^\alpha_{\infty\infty}(Q_k)\to B^\alpha_{\infty\infty}(\R^k),~~~~ g\mapsto \mathcal E(g).\] For the existence of such operators on Lipschitz domains, see, e.g., \cite{T06}.

Let us denote by $P_J^k$ the $J$-th resolution level wavelet projection operator on $B^\alpha_{\infty\infty}(\R^k)$,
\[ P_J^k(g)=\sum_{l=0}^J\sum_{m\in \Z^k} \psi_{lm}^k \langle\psi_{lm}^k,g \rangle, ~~~g \in B^\alpha_{\infty\infty}(\R^k),\]
Now let $g\in B^\alpha_{\infty\infty}(\R^k)$ and define $\tilde g\coloneqq P_J(\mathcal E(F))|_{Q_d}$. Then, by the standard characterisation of Besov norms in terms of wavelet decay (see, e.g., p. 370 in \cite{GN16}) and since $2^{J}\simeq  N^{\frac{1}{2\alpha+d}}$, we have that 

\begin{equation*}
\begin{split}
    \|g-\tilde g\|_{L^\infty(Q_k)} &\le \|\mathcal E(g)-P_J^k\mathcal E(g)\|_{L^\infty(\R^k)}\\
    &\lesssim \sup_{l\ge J+1} 2^{lk/2}\sup_{m\in \Z^k} |\langle \mathcal E(g), \psi_{lmx }^k\rangle| \\
    &\le 2^{-J\alpha} \|\mathcal E(g)\|_{B^\alpha_{\infty\infty}(\R^k)}\lesssim N^{-\frac{\alpha}{2\alpha+d}}\|g\|_{B^\alpha_{\infty\infty}(Q_k)}.
\end{split}    
\end{equation*}
Since $F^\natural_k \in C^\alpha(Q_k)$ by (\ref{C-alpha-bound}) and since $C^\alpha(Q_k)\subseteq B^\alpha_{\infty\infty}(Q_k)$ (continuous embedding), we obtain that the above approximation holds for the choice $g=F^\natural_k$ uniformly over all $p_0\in \mathcal M(\alpha,L,c_{\text{mon}})$. Moreover, note that for any $g\in C^\alpha(Q_k)$, denoting $g_{lm}=\langle \psi_{lm}^k, \mathcal E (g)\rangle_{L^2(\R^k)}$, $(l,m)\in I_{k,J}$,
\begin{equation*}
\begin{split}
    \sum_{0\le l\le J}\sum_{1\le m\le L_l^k} 2^{2\alpha l} g_{lm}^2 &\simeq \big\|\sum_{0\le l\le J}\sum_{1\le m\le L_l^k} g_{lm}\psi_{lm}^k \big\|_{B^\alpha_{22}(\R^k)}^2\\
    &\le \|\mathcal E(g) \|_{B^\alpha_{22}(\R^k)}^2\lesssim \|\mathcal E(g) \|_{B^\alpha_{\infty\infty}(\R^k)}^2 \lesssim \|g \|_{B^\alpha_{\infty\infty}(Q_k)}^2.
\end{split}
\end{equation*}

Now we choose $\theta_*\in \Theta$ to be given by the component vectors
\begin{equation}\label{sea-star}
    \theta_k^*=(\langle \psi_{lm}^k,\mathcal E(F^\natural_k) \rangle_{L^2(\R^k)}: 0\le l\le J,~1\le m\le L^k_l), ~~~1\le k\le d.
\end{equation} Combining the above with the fact that $S_{\theta_*}^\#\eta$ and $p_0$ are (uniformly) lower bounded, the local Lipschitz estimate (\ref{pushfwd-cont}) and the bound (\ref{upper}), we obtain that
\begin{align*}
    \tau^2(\theta_*,p_0)&= h^2(S_{\theta_*}^\#\eta,p_0) + N^{-\frac{2\alpha}{2\alpha+d}} \|\theta_*\|_{b^\alpha_{22}}^2\\
    &\le \|(S^{p_0})^\# \eta - S_{\theta_*}^\#\eta \|_\infty^2 + N^{-\frac{2\alpha}{2\alpha+d}}\sum_{k=1}^d \|F_k^\natural\|_{B^\alpha_{\infty\infty}(Q_k)}^2\\
    &\lesssim \| S^{p_0}- S_{\theta_*} \|_{C^1_{\text{diag}}}^2 + N^{-\frac{2\alpha}{2\alpha+d}}.
\end{align*}
For $F_{\theta_*}$ given by (\ref{F-theta}), the increment bound (\ref{increment}) and the regularity bound (\ref{C-alpha-bound}) for $F^\natural$ further imply that
\begin{align*}
\| S^{p_0}- S_{\theta_*} \|_{C^1_{\text{diag}}} \lesssim \|F^\natural - F_{\theta_*} \|_{L^\infty(Q_d,\R^d)} \lesssim N^{-\frac{\alpha}{2\alpha+d}}\|F^\natural\|_{B^\alpha_{\infty\infty}}&\lesssim N^{-\frac{\alpha}{2\alpha+d}}\|F^\natural\|_{C^\alpha(Q_d,\R^d)}\\
&\lesssim N^{-\frac{\alpha}{2\alpha+d}},
\end{align*}
which proves that $\tau^2(\theta_*,p_0)\lesssim N^{-\frac{2\alpha}{2\alpha+d}}$, uniformly over $p_0\in \mathcal M(\alpha,L,c_{\text{mon}})$.
\smallskip

2. \textit{Metric entropy}. As in the proof of Theorem \ref{thm-sobolev}, we need to derive an upper bound for the metric entropy for the sets $\mathcal S_*(\lambda_N,R)$ defined in (\ref{theta-*}). For each $1\le k\le d$, we observe that uniformly over $\theta\in \Theta_*(\lambda_N,R)=\{ \theta\in\Theta: \tau^2(\theta,S_{\theta_*}^\#\eta )\le R^2\}$, we have that
\[ \|F_{\theta,k}\|_{H^\alpha(Q_k)}\lesssim \|F_{\theta}\|_{H^\alpha_\Delta(Q_d,\R^d)}\lesssim \sum_{k=1}^d\|F_{\theta,k}\|_{B^\alpha_{22}(Q_k)}\le \|\theta\|_{b^\alpha_{22}}\le \lambda_N/R. \]
Thus, for the remainder of the proof, we may argue exactly as after (\ref{sob-bound}) in the proof of Theorem \ref{thm-sobolev}; except that at the end we utilize Theorem \ref{thm-penalty-gen} with $\theta_*$ chosen as in (\ref{sea-star}) in place of $F^\natural$.
\end{proof}

\begin{proof}[Proof of Lemma \ref{lem-stability}]
    Let us write $S=S_{p,\eta}$, $\bar S=S_{\bar p, \eta}$. Let $x\in Q_d$ and $k\in \{1,\dots, d\}$. We recall the notations $p_k,\bar p_k$ for the marginal conditionals of $p,\bar p$ from (\ref{nu-k-def}), as well as the corresponding CDFs $F^p_{k,x_{1:k-1}},F^{\bar p}_{k,x_{1:k-1}}$ given by (\ref{Fdef}). Since $\eta$ factorizes, we may write $F^\eta_k=F^\eta_{k,x_{1:k-1}}$ for its corresponding CDF since it is independent of $x_{1:k-1}$. Because $\eta_k$ is lower bounded and $C^1$, $(F^\eta_k)^{-1}$ is uniformly bounded in $C^2$. We then obtain that
\begin{equation}\label{ugly-estimate}
    \begin{split}
    |S_k(x_{1:k})&-\bar S_k(x_{1:k})|=\big|\big[\big(F^\eta_{k}\big)^{-1} \circ F^p_{k,x_{1:k-1}}\big](x_k) -\big[\big(F^\eta_{k}\big)^{-1} \circ F^{\bar p}_{k,x_{1:k-1}}\big](x_k)\big|\\
    &\le \|\big(F^\eta_{k}\big)^{-1}\|_{Lip} \big|F^p_{k,x_{1:k-1}}(x_k)-F^{\bar p}_{k,x_{1:k-1}}(x_k)\big|\\
    &\lesssim \int_{0}^{x_k}\Big| \frac{\tilde p_k(x_{1:k-1},y)}{\tilde p_{k-1}(x_{1:k-1})}- \frac{\tilde{\bar p}_k(x_{1:k-1},y)}{\tilde{\bar p}_{k-1}(x_{1:k-1})}\Big|dy\\
    &\lesssim \big|\tilde p_{k-1}(x_{1:k-1})-\tilde{\bar p}_{k-1}(x_{1:k-1})\big| + \big\|\tilde p_k(x_{1:k-1},\cdot)-\tilde{\bar p}_k(x_{1:k-1},\cdot)\big\|_{L^1([0,1])}\\
    &\lesssim \big\| p(x_{1:k-1},\cdot)-\bar p(x_{1:k-1},\cdot)\big\|_{L^1(Q_{d-k+1})}.
    \end{split}
\end{equation}
    Integrating (the square of) this inequality over $x_{1:k}$, using Minkowski's inequality, concavity of $x\mapsto \sqrt x$ as well as Jensen's inequality, we obtain that
    \begin{align*}
        \|S_k-\bar S_k\|_{L^2(Q_k)}&=\Big( \int_{Q_k} \big\| p(x_{1:k-1},\cdot)-\bar p(x_{1:k-1},\cdot)\big\|_{L^1(Q_{d-k+1})}^2dx_{1:k}\Big)^{1/2}\\
        &\lesssim \|p-\bar p \|_{L^2}.
    \end{align*}
    
    The bound for the `diagonal' derivatives $\|\partial_kS_k-\partial_k\bar S_k\|_{L^2}$ is derived as follows. Using the chain rule, we see that

   \begin{equation*}
        \begin{split}
            |\partial_kS_k(x_{1:k})&-\partial_k\bar S_k(x_{1:k})|\\
            &~~~~~~=\big|\big[\big(F^\eta_{k}\big)^{-1} \circ F_{k,x_{1:k-1}}^p\big]'(x_k)-\big[\big(F^\eta_{k}\big)^{-1} \circ F_{k,x_{1:k-1}}^{\bar p}\big]'(x_k)\big|\\
            &~~~~~~\le \Big(\big[\big(F^\eta_{k}\big)^{-1}\big]' \circ F_{k,x_{1:k-1}}^p(x_k)-\big[\big(F^\eta_{k}\big)^{-1}\big]' \circ F_{k,x_{1:k-1}}^{\bar p}(x_k)\Big){F_{k,x_{1:k-1}}^p}'(x_k)
            \\&~~~~~~~~~~~~~~ +\big[\big(F^\eta_{k}\big)^{-1}\big]' \circ F_{k,x_{1:k-1}}^{\bar p}(x_k) \Big({F_{k,x_{1:k-1}}^p}'(x_k) -{F_{k,x_{1:k-1}}^{\bar p}}'(x_k)\Big)\\
            &~~~~~~\le  \big\|\big(F^\eta_{k}\big)^{-1}\big\|_{C^2}\|{F_{k,x_{1:k-1}}^p}'\|_\infty |F_{k,x_{1:k-1}}^p(x_k) - F_{k,x_{1:k-1}}^{\bar p}(x_k)| \\
            &~~~~~~~~~~~~~~ + \big\|\big(F^\eta_{k}\big)^{-1}\big\|_{C^1} |{F_{k,x_{1:k-1}}^p}'(x_k) - {F_{k,x_{1:k-1}}^{\bar p}}'(x_k)|\\
            &~~~~~~ \lesssim |F_{k,x_{1:k-1}}^p(x_k) - F_{k,x_{1:k-1}}^{\bar p}(x_k)| + |p_k(x_{1:k})- \bar p_k(x_{1:k})|.
        \end{split}
    \end{equation*}
    First term can then be further estimated as in (\ref{ugly-estimate}), and the second term can be bounded by
    \[ |p_k(x_{1:k})- \bar p_k(x_{1:k})|\lesssim \big|\tilde p_{k-1}(x_{1:k-1})-\tilde{\bar p}_{k-1}(x_{1:k-1})\big| + \big|\tilde p_k(x_{1:k})-\tilde{\bar p}_k(x_{1:k})\big|.  \]
    The rest of the proof similarly follows by integrating (the square of) the preceding inequalities, and is thus omitted here.
\end{proof}

\section{Proofs for Section \ref{MLE-res}}\label{MLE-proofs}

We begin with the proofs of Propositions \ref{prop-fact} and \ref{prop-sharp}.

\begin{proof}[Proof of Proposition \ref{prop-fact}]
    Fix $\nu\in \tilde{\mathcal M}(\alpha,B,c)$ (target) and $\mu\in \tilde{\mathcal M}(\alpha,B,c)$ (reference). For ease of notation, we shall write $S= S_{\nu,\mu}$, and we denote the one-dimensional marginal distributions of $\mu$ by $\mu(x)= \Pi_{k=1}^d m_k(x_k)$. Then, from the definition of the $k$-th `marginal conditional' density $\mu_k(x_{1:k})$ given in (\ref{eta-def})--(\ref{nu-k-def}), it is clear that 
    \[ \mu_k(x_{1:k})=\frac{\Pi_{l=1}^k m_l(x_l)}{\Pi_{l=1}^{k-1} m_l(x_l)}= m_k(x_k).\]
    In particular, $\mu_k$ is independent of $x_{1:k-1}$. We may thus write $F^\mu_{k}\equiv F^\mu_{k,x_{1:k-1}}= \int_{-1}^\cdot \mu_k(y)dy$, whence $S_k$ admits the form    
    \begin{equation}\label{simple-S}
        S_k(x_{1:k})= \big[(F^\mu_k)^{-1}\circ F_{k:x_{1:k-1}}^\nu \big](x_k).
    \end{equation}
    
    To bound the anisotropic smoothess of $S_k$, we notice that since the class of densities $\tilde {\mathcal M}(\alpha,B,c)$ is lower bounded, there are constants $C_1,c_1 >0$ such that for any $\mu\in \tilde{\mathcal M(\alpha,B,c)}$, 
    \begin{align}
    \|F^\mu_k\|_{C^{\alpha+1}([0,1])} &\le C_1~~~ \text{and} \notag\\
    \inf_{y\in [0,1]}(F^\mu_k)'(y)=\inf_{y\in [0,1]} m_k(y) &\ge c_1. \label{mk-lb}
    \end{align}
    We claim that the inverse $(F^\mu_k)^{-1}$ of $F^\mu_k$ also belongs to $C^{\alpha+1}$. To begin, (\ref{mk-lb}) implies that $(F^\mu_k)^{-1}:[0,1]\to [0,1]$ is uniformly continuous. Thus, using that
    \begin{equation}\label{inv-rule}
   {(F^\mu_k)^{-1}}'(y)=\frac{1}{{[(F^\mu_k)}' \circ (F^\mu_k)^{-1}] (y)}= \frac{1}{[m_k \circ (F^\mu_k)^{-1}] (y)},~~~ y\in [0,1],
    \end{equation}
    along with Lemma \ref{faadibruno}, the lower bound (\ref{mk-lb}), and the fact that $x\mapsto x^{-1}$ is uniformly Lipschitz on $[c_1,\infty)$, it follows that $(F^\mu_k)^{-1}\in C^1$. Thus, we may again use (\ref{inv-rule}) to infer that $(F^\mu_k)^{-1}\in C^2$, and bootstrapping this argument yields that for some $C_1'>0$,
	\[ \sup_{\mu\in \mathcal M(\alpha,B,c)}\|(F^\mu_k)^{-1}\|_{C^{\alpha+1}} \le C_1'.\]
	
	Combining the preceding claim with (\ref{simple-S}) and Lemma \ref{faadibruno} yields that $S_k\in C^\alpha(Q_k,\R)$. In addition, recalling the notation $\nu_k$ from (\ref{nu-k-def}), we see that
	\begin{equation}\label{dk-Sk}
	   \begin{split}
	   \partial_k S_k(x_{1:k}) &=\big[{(F^\mu_k)^{-1}}'\circ F^\nu_{k,x_{1:k-1}}\big](x_k) {F^\nu_{k,x_{1:k-1}}}'(x_k)\\
	   &=\frac{\nu_k(x_{1:k})}{\big[m_k\circ(F^\mu_k)^{-1}\circ F^\nu_{k,x_{1:k-1}}\big](x_k)}.
	   \end{split}
	\end{equation}
	Since $\nu\ge c$ is lower bounded, we obtain immediately from (\ref{eta-def}) that for any $1\le k\le d$,
	\[ \inf_{x_{1:k}\in Q_k} \tilde \nu_k \ge c ~~~\text{and} ~~~ \tilde \nu_k\in C^\alpha(Q_k),\]
	whence $\nu_k \in C^\alpha(Q_k)$. Thus, applying Lemma \ref{faadibruno} again to (\ref{dk-Sk}) shows that $\|\partial_k S_k\|_{C^\alpha}\le M$ for some large enough $M$. Since $\nu_k$ and $m_k$ are lower and upper bounded, clearly $\inf_{x_{1:k}\in Q_k} \partial_kS_k > c_{\text{mon}}$ for some $c_{\text{mon}}>0$, and the proof is complete.
\end{proof}

\begin{proof}[Proof of Proposition \ref{prop-sharp}]
    First, by definition of $S^\#\eta$ we observe that 
    \[ \inf_{S\in \KR(\alpha, L, c_{\text{mon}}),~x\in Q_d}S^\#\eta(x) \ge c' >0,\]
    for some $c'>0$.
    This follows immediately from the chain rule \eqref{faadibruno} and the standard multiplication inequality for H\"older-type spaces
    \[ \|fg\|_{C^\alpha}\le C \|f\|_{C^\alpha}\|g\|_{C^\alpha}, ~~\text{for some}~ C>0,~\text{all}~f,g,\in C^\alpha.\]
    Indeed, we then have for some increasing function $F$ that
    \begin{equation}
        \begin{split}
        \|S^\# \eta \|_{C^\alpha}&= \| \eta \circ S ~ \det \nabla S\|_{C^\alpha}= \| \eta \circ S\|_{C^\alpha}\big\|\Pi_{k=1}^d\partial_kS_k \big\|_{C^\alpha}\\
        &\lesssim F(\|\eta\|_{C^\alpha},\|S\|_{C^\alpha}) \Pi_{k=1}^d\|\partial_kS_k\|_{C^\alpha},
        \end{split}
    \end{equation}
    which is uniformly bounded over $S\in \KR(\alpha,L,c_{\text{mon}})$.
\end{proof}

Given the previous two propositions, Theorem \ref{thm-anisotropic} will be obtained by utilizing a general convergence result for maximum likelihood estimators developed in \cite{VDG93,VDG00}, which we briefly recall here for the convenience of the reader. Recalling the $L^2$-bracketing metric entropy $H_B(\mathcal F,P_0,\rho)$ for any class $\mathcal F$ of functions $Q_d\to\R$, defined in (\ref{bracketing}), we define the following bracketing metric entropy integral:
\[ J_{B}(\mathcal F, P_0,\delta) \coloneqq \int_{0}^\delta H_B^{1/2}(\mathcal F, P_0,\rho) d\rho. \]

The next proposition, which follows from Theorem 10.13 from \cite{VDG00}, relates the convergence rates of maximum likelihood estimators over any class of densities $\mathcal P$, to the entropy integral of its square root densities,
\[ \mathcal A_{\mathcal P}\coloneqq\Big\{ \sqrt{\frac{p+p_0}{2}}: p\in \mathcal P, h(p,p_0)\le \delta \Big\}, ~~~ \delta>0. \]

\begin{prop}\label{sara}
	Let $\mathcal P$ be a class of densities such that $p_0\in\mathcal P$. Suppose that $\Psi(\delta)\ge J_{B}(\mathcal A_{\mathcal P}, P_0,\delta)$ is an upper bound such that $\Psi(\delta) / \delta^2$ is non-increasing in $\delta$. There exists a constant $C>0$ such that for any $N\ge 1$ and $\delta>0$ satisfying
	\begin{equation}\label{entropy-requirement}
	    \delta^2 \sqrt N \ge C \Psi(\delta)
	\end{equation}
	and all $R\ge \delta$, we have
	\begin{equation}\label{hellinger-focus}
		P_0^n\big( h(\hat p, p_0) \ge R \big) \le C\exp\Big( -\frac{nR^2}{C} \Big).
	\end{equation}
\end{prop}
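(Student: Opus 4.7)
The plan is to adapt the M-estimation strategy already used in the proof of Theorem \ref{thm-penalty-gen}, specialized to the unpenalized case. First I would extract a \emph{basic inequality} from the defining optimality of the MLE. Since $\hat p$ minimizes $-\int \log p \, dP_N$ over $\mathcal P\ni p_0$, we have $\int \log(\hat p / p_0) \, dP_N \ge 0$, and concavity of $\log$ yields
\begin{equation*}
    2\int \log \frac{\hat p + p_0}{2p_0}\, dP_N \ge \int \log\frac{\hat p}{p_0}\, dP_N \ge 0.
\end{equation*}
Splitting the empirical integral into empirical process and population parts, and applying $\log x \le 2(\sqrt{x}-1)$ together with the identity $-\int(\sqrt{(q+p_0)/(2p_0)}-1)\, dP_0 = h^2((q+p_0)/2,p_0)$, this reduces to
\begin{equation*}
    h^2\!\left(\tfrac{\hat p + p_0}{2}, p_0\right) \;\lesssim\; \int g_{\hat p} \, d(P_N - P_0), \qquad g_p(x) := \log\tfrac{p(x)+p_0(x)}{2 p_0(x)},
\end{equation*}
which reduces the problem to bounding an empirical process indexed by the class $\{g_p : p \in \mathcal P\}$.

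Second, I would employ a standard \emph{peeling (slicing)} argument: for each $s \ge 0$, localize over shells $\mathcal P_s = \{p \in \mathcal P : 2^s R \le h((p+p_0)/2, p_0) < 2^{s+1} R\}$, and union-bound
\begin{equation*}
    P_0^N\!\bigl(h(\hat p,p_0) \ge R\bigr) \;\le\; \sum_{s\ge 0} P_0^N\!\left(\sup_{p\in \mathcal P_s} \Big|\sqrt N \!\int g_p\, d(P_N - P_0)\Big| \;\gtrsim\; \sqrt N \, 2^{2s} R^2 \right).
\end{equation*}
On each shell the fluctuations of the empirical process are controlled by a Bernstein-type maximal inequality (e.g.\ Theorem 5.11 of \cite{VDG00}), whose hypotheses require (i) a uniform bound on the Bernstein size $\rho_1(g_p)$ over $\mathcal P_s$, and (ii) a bracketing-entropy integral bound for $\{g_p : p \in \mathcal P_s\}$ with respect to $\rho_1$. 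A direct computation (analogous to Lemma \ref{lem-bernstein}) shows that $\rho_1(g_p) \lesssim h((p+p_0)/2, p_0) \le 2^{s+1}R$, so the Bernstein radius on the $s$-th shell is of order $2^s R$.

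Third, I would transfer the $\rho_1$-bracketing bound to an $L^2$-bracketing bound on $\mathcal A_{\mathcal P}$: since $(p+p_0)/(2p_0)$ is bounded below by $1/2$, the log-ratio $g_p$ is a Lipschitz function of $\sqrt{(p+p_0)/2}$, giving $H_{B,\rho_1}(\{g_p\},\cdot,\varepsilon) \lesssim H_B(\mathcal A_{\mathcal P}, P_0, \varepsilon/C)$. The entropy integral appearing in the maximal inequality is therefore dominated by $J_B(\mathcal A_{\mathcal P}, P_0, 2^s R) \le \Psi(2^s R)$, and the monotonicity hypothesis on $\Psi(\delta)/\delta^2$ together with the scaling condition $\delta^2 \sqrt N \ge C \Psi(\delta)$ ensures $\Psi(2^s R) \le 2^{2s} \sqrt N R^2/C$ for every $R \ge \delta$, verifying the hypotheses of the maximal inequality and yielding a tail bound $\exp(-c N 2^{2s} R^2)$ per shell. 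Summing the geometric series gives the claimed exponential concentration.

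The main technical obstacle is the empirical-process step: controlling the supremum of $\int g_p \, d(P_N-P_0)$ over each shell requires the Bernstein-size bracketing-entropy machinery, and the translation between bracketing numbers for square-root densities (\(\mathcal A_{\mathcal P}\)) and for the log-ratio class \(\{g_p\}\) is delicate because it relies crucially on the lower bound on $(p+p_0)/p_0$ to ensure that the logarithm is Lipschitz on the relevant range. Once this conversion is in place, the peeling/entropy scaling is standard.
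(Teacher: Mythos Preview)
Your proposal is correct and essentially reconstructs the argument that the paper simply cites: the paper's own proof is a two-line pointer to Theorem 10.13 (and Theorem 7.4) of \cite{VDG00}, observing that the proof there yields the concentration inequality \eqref{hellinger-focus} and not merely the rate. What you have written is precisely the skeleton of that argument from \cite{VDG00}, specialized to the case $p_0\in\mathcal P$ (so the bias term $h(p_*,p_0)$ vanishes and no upper bound on $p_0/p^*$ is needed, since $(p+p_0)/(2p_0)\ge 1/2$ automatically). It is also, as you note, the unpenalized specialization of the proof of Theorem \ref{thm-penalty-gen}. So there is no genuine methodological difference---the paper defers to the reference, while you spell out the same machinery.
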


\begin{proof}
    Inspection of the proof of Theorem 10.13 in \cite{VDG00} shows that we can assert not just  $h(\hat p_n,p_0)=O_{P^n_0}(\delta + h(p_*,p_0))$ (where we have adopted notation from the reference), but also the stronger concentration bound (\ref{hellinger-focus}); see also the proof of Theorem 7.4 in \cite{VDG00}.
\end{proof}

\begin{proof}[Proof of Theorem \ref{thm-anisotropic}]
    Suppose that $p_0\in \mathcal M(\alpha,B,c)$ and $\eta \in 
    \tilde{\mathcal M}(\alpha,B,c)$. Then, by Proposition \ref{prop-fact} there exists $S_0\in \KR (\alpha, L, c_{\text{mon}})$ (for some $L,c_{\text{mon}}>0$) such that $S_0^\#\eta=p_0$. We fix such $L,c_{\text{mon}}$, noting that these constants may be chosen uniformly over $p_0\in \mathcal M(\alpha,B,c)$. Moreover, by Proposition \ref{prop-sharp}, we have the set inclusion
    \[ \big\{S^\#\eta: S\in \KR(\alpha, L, c_{\text{mon}})\big\}\subseteq \mathcal M( \alpha,\tilde B,\tilde c), \]
    for some $\tilde B,\tilde c>0$. Since $p_0$ is bounded away from zero and since $x\mapsto \sqrt x$ is globally Lipschitz on $[l, \infty)$ for any $l>0$, the class of square root densities
    \[\mathcal A= :\Big\{\sqrt \frac{S^\#\eta +p_0}2:S\in S\in \KR (\alpha, L, c_{\text{mon}}) \Big\}\]
    is embedded in some ball of $C^{\alpha}(Q_d, \R)$. Thus, using Corollary 3 in \cite{NP07} (and noting the remark in Section 3.3.4 of \cite{NP07}), we respectively obtain the bracketing metric entropy bound and entropy integral estimate
    \[H_B(\rho,\mathcal A,p_0)\lesssim \rho^{d/\alpha},~~~\int_{0}^\delta H_B^{1/2}(\rho,\mathcal A,p_0)d\rho \lesssim \delta^{\frac{2\alpha-d}{2\alpha}}.\]
    Standard calculations then yield that the requirement (\ref{entropy-requirement}) translates to the condition
    \[ \delta \gtrsim N^{-\frac{\alpha}{2\alpha+d}}. \]
    Since $S_0\in \KR (\alpha,L,c_{\text{mon}})$, we may apply Proposition \ref{sara} with $p_*=p_0$ and $\delta \equiv \delta_N \simeq N^{-\frac{\alpha}{2\alpha+d}}$, which completes the proof of Theorem \ref{thm-anisotropic}.
\end{proof}

The proof of the following basic lemma follows from a generalized Fa\`a di Bruno's formula, such as stated in \cite{EM03}, Theorem 1.

\begin{lem}\label{faadibruno}
	Suppose $U\subseteq \R^{n},~ V\subseteq \R^{m}$ are two open subsets. Then, for each $B<\infty$ there exists a constant $C$ such that for any $r$-times ($r\in \N$) continuously differentiable functions $U\xrightarrow{F}V\xrightarrow{G}\R$ with $\max\{\|F\|_{C^\alpha}, \|G\|_{C^\alpha}\}\le B$, we have that $ \|G\circ F\|_{C^\alpha}\le C$.
\end{lem}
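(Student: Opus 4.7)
The plan is to invoke the multivariate Faà di Bruno formula stated in \cite{EM03}, Theorem 1. For every multi-index $\beta\in \N^n$ with $1\le |\beta|\le \lfloor \alpha\rfloor$, that formula gives an identity of the form
\[
\partial^\beta (G\circ F)(x)= \sum_{(\gamma,\Delta)\in \mathcal I_\beta} c_{\gamma,\Delta}\,(\partial^\gamma G)(F(x))\,\prod_{(i,\delta)\in \Delta}\partial^{\delta} F_i(x),
\]
where $\mathcal I_\beta$ is a \emph{finite} index set depending only on $\beta$, $n$, $m$, the multi-indices $\gamma\in \N^m$ and $\delta\in \N^n$ satisfy $|\gamma|\le |\beta|$, $|\delta|\le |\beta|$, and the $c_{\gamma,\Delta}$ are universal combinatorial constants.

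First, I would use this identity, together with the bounds $\|\partial^\gamma G\|_\infty\le \|G\|_{C^\alpha}\le B$ and $\|\partial^\delta F_i\|_\infty\le \|F\|_{C^\alpha}\le B$, to dominate each summand by a constant multiple of $B^{|\Delta|+1}$. Summing over the finite index set $\mathcal I_\beta$ and over $0\le |\beta|\le \lfloor \alpha\rfloor$ yields the uniform sup-norm bound $\sum_{|\beta|\le \lfloor \alpha\rfloor}\|\partial^\beta (G\circ F)\|_\infty \le C_1$ for a constant $C_1$ depending only on $\alpha, n, m, B$. This handles the integer part of the $C^\alpha$ norm.

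Second, for the Hölder seminorm (which is only needed when $\alpha$ is non-integer, but is harmless in the integer case), I would estimate
\[
\big|\partial^\beta (G\circ F)(x)-\partial^\beta (G\circ F)(y)\big|
\]
for $|\beta|=\lfloor\alpha\rfloor$ by writing the difference, via the Faà di Bruno expansion above, as a finite sum of terms in which either one factor $(\partial^\gamma G)(F(x))-(\partial^\gamma G)(F(y))$ or one factor $\partial^\delta F_i(x)-\partial^\delta F_i(y)$ is replaced by its increment, while all other factors are evaluated at $x$ or $y$. The first type of increment is bounded by $\|G\|_{C^\alpha}\,|F(x)-F(y)|^{\alpha-\lfloor\alpha\rfloor}\le B^{1+(\alpha-\lfloor\alpha\rfloor)}|x-y|^{\alpha-\lfloor\alpha\rfloor}$ using the Lipschitz bound on $F$, and the second by $B\,|x-y|^{\alpha-\lfloor\alpha\rfloor}$ directly. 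The remaining factors are uniformly bounded by $B$, as above. Collecting contributions gives the desired Hölder seminorm bound.

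The only potentially delicate step is the combinatorial bookkeeping in the Hölder increment part, since one must show all factors (including the "outer" derivative $\partial^\gamma G$ evaluated at $F(x)$) retain the correct orders after taking differences. This is not a real obstacle, however, because the Faà di Bruno formula controls the maximum order of derivatives of $G$ and $F$ appearing, and the multiplication inequality $\|fg\|_{C^\alpha}\lesssim \|f\|_{C^\alpha}\|g\|_{C^\alpha}$ (which follows by the same product-rule argument) can equivalently be invoked to combine the factors. Putting these estimates together yields $\|G\circ F\|_{C^\alpha}\le C$ for a constant $C=C(\alpha,n,m,B)$, as claimed.
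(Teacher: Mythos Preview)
Your approach is correct and coincides with the paper's: the paper simply states that the lemma ``follows from a generalized Fa\`a di Bruno's formula, such as stated in \cite{EM03}, Theorem 1,'' without giving further details, and you have spelled out exactly this argument. Note that the lemma is stated for integer regularity ($r\in\N$, so $\alpha=r$), so your second step treating the H\"older seminorm is not needed here, though it is harmless.
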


\section*{Acknowledgements}

SW and YM acknowledge support from the Air Force Office of Scientific Research Multidisciplinary University Research Initiative (MURI) project ANSRE. We also thank Ricardo Baptista, Thibaut Horel, Bamdad Hosseini, and Richard Nickl for helpful discussions.

\bibliography{tvbib,atmreferences}
\bibliographystyle{abbrv}

\end{document}